\documentclass[12pt]{amsart}

\textwidth=15cm
\textheight=23cm
\topmargin=-0.5cm
\oddsidemargin=0.5cm
\evensidemargin=0.5cm

\usepackage{bm,amssymb,color}
\usepackage{graphicx}

\numberwithin{equation}{section}

\newtheorem{theorem}{Theorem}[section]
\newtheorem{proposition}[theorem]{Proposition}
\newtheorem{lemma}[theorem]{Lemma}
\newtheorem{corollary}[theorem]{Corollary}

\theoremstyle{definition}

\newtheorem{example}[theorem]{Example}

\newtheorem{remark}[theorem]{Remark}

% Macros in general

\newcommand{\Tor}{\ensuremath{\mathrm{Tor}}\hspace{1pt}}

\def\cocoa{{\hbox{\rm C\kern-.13em o\kern-.07em C\kern-.13em o\kern-.15em A}}}

%Mcros in this paper

\begin{document}

\title[Betti splittings and cover ideals of bipartite graphs]{Betti splittings and multigraded Betti numbers of cover ideals of bipartite graphs}

\author{Satoshi Murai}
\address{
Satoshi Murai,
Department of Mathematics
Faculty of Education
Waseda University,
1-6-1 Nishi-Waseda, Shinjuku, Tokyo 169-8050, Japan}
\email{s-murai@waseda.jp}
\thanks{The first author is partially supported by KAKENHI 21K03190 and 21H00975.
}

\author{Mitsuki Shiina}
\address{
Mitsuki Shiina,
Department of Mathematics
Faculty of Education
Waseda University,
1-6-1 Nishi-Waseda, Shinjuku, Tokyo 169-8050, Japan}
\email{mikkii\_shiina@akane.waseda.jp}

%\author{Another Author}
%\address{
%}
%\email{}

%Keyword and Subject Classes (if needed)
%\keywords{}
%\subjclass[2000]{}
%\dedicatory{Dedicated to on the occasion of his birthday}

\begin{abstract}
In this note, we study Betti splittings
 of cover ideals of bipartite graphs.
We prove that if $J \subset \Bbbk [x_1,\dots,x_n]$ is the cover ideal of a bipartite graph then the $x_i$-partition of $J$ is a Betti splitting for any $i$.
We also prove that
multigraded Betti numbers of any squarefree monomial ideal can appear in a certain part of multigraded Betti numbers of the cover ideal of a bipartite graph.
\end{abstract}

\maketitle

\section{Introduction}

A study of graded Betti numbers of monomial ideals is a central topic in combinatorial commutative algebra.
Francisco, H\`a and Van Tuyl \cite{FHV} introduced an interesting method to study Betti numbers of monomial ideals, which they call {\em Betti splittings},
inspired from the work of Eliahou and Kervaire \cite{EK}.
In particular, 
a number of applications of Betti splittings for edge ideals of graphs were discovered in \cite{FHV,HV,V}.
On the other hand,
not much are known about Betti splittings of cover ideals, and it was asked by Van Tuyl \cite[Question 2.26 and Excercise 5.1.13]{V} if there are nice ways to construct Betti splittings of cover ideals.
In this short note, we find new Betti splittings of cover ideals and apply it to study multigraded Betti numbers of bipartite graphs.

We first define graded Betti numbers and cover ideals.
Let $S=\Bbbk[x_1,\dots,x_n]$ be a polynomial ring over a field $\Bbbk$.
We consider the standard grading of $S$ defined by $\deg (x_i)=1$ for any $i$
as well as the $\mathbb Z^n$-grading (multigrading) of $S$ defined by $\deg (x_i)=\mathbf e_i \in \mathbb Z^n$ for any $i$,
where $\mathbf e_1,\dots,\mathbf e_n$ are standard vectors of $\mathbb Z^n$.
For a homogeneous ideal $I \subset S$
and $i,j \in \mathbb Z_{\geq 0}$,
the number
\[
\beta_{i,j}(I)=\dim_\Bbbk \Tor_i(\Bbbk,I)_j
\]
is called the {\em $(i,j)$th graded Betti number of $I$},
where $M_j$ denotes the degree $j$ graded component of a graded $S$-module $M$.
If $I$ is a monomial ideal, then $I$ is a $\mathbb Z^n$-graded $S$-module and we can define the {\em $\mathbb Z^n$-graded Betti number}
\[
\beta_{i,\bm a}(I)=\dim_\Bbbk \Tor_i(\Bbbk,I)_{\bm a}
\]
for $i \in \mathbb Z_{\geq 0}$ and $\bm a \in \mathbb Z_{\geq 0}^n$.
We next recall cover ideals.
Let $G$ be a (finite simple) graph on $[n]=\{1,2,\dots,n\}$ with the edge set $E(G)$.
Thus $G$ is the pair $([n],E(G))$ of the set $[n]$ and a collection $E(G)$ of $2$-element subsets of $[n]$.
Elements of $[n]$ are called vertices of $G$
and elements of $E(G)$ are called edges of $G$.
A vertex $v$ of $G$ is {\em isolated} if $G$ has no edges containing $v$.
A subset $W \subset [n]$ is a {\em vertex cover} of $G$ if, for every edge $\{u,v \} \in E(G)$ one has $u \in W$ or $v \in W$.
A vertex cover of $G$ which is minimal with respect to inclusion is called a {\em minimal vertex cover} of $G$.
The {\em cover ideal} $J(G)$ of $G$ is the monomial ideal
\[
J(G)=(x^A \mid \mbox{$A$ is a (minimal) vertex cover of $G$}) \subset S,
\]
where $x^A=\prod_{i\in A} x_i$.
The cover ideal $J(G)$ is closely related to the the edge ideal $I(G)$ of $G$,
which is the monomial ideal generated by all $x_ix_j$ with $\{i,j\} \in E(G)$.
Indeed, it is known that $J(G)$ is the Alexander dual ideal of $I(G)$.
Studying algebraic invariants of edge ideals and cover ideals are one of the central topics in combinatorial commutative algebra.
See \cite{HH,V,Vi} for more information on this subject.

Next, we recall Betti splittings introduced in \cite{FHV}.
For a monomial ideal $I \subset S$,
we write $\mathcal G(I)$ for the set of minimal monomial generators of $I$.
We say that a pair $(K,L)$ of monomial ideals of $S$ is a {\em partition} of $I$ if $\mathcal G(I)=\mathcal G (K) \cup \mathcal G(L)$ and $\mathcal G(K) \cap \mathcal G(L)=\varnothing$.
We say that a partition $(K,L)$ of $I$ is a {\em Betti splitting} of $I$ if
\[
\beta_{i,j}(I)=\beta_{i,j}(K)+\beta_{i,j}(L)+\beta_{i-1,j}(K\cap L)
\ \ \mbox{ for all $i \geq 1$ and $j \in \mathbb Z_{\geq 0}$.}
\]
For convention we consider that a trivial partition $(I,(0))$ is also a Betti splitting.

We are actually interested in a special type of partitions called $x_i$-partitions.
Let $I$ be a monomial ideal and $\mathcal G_i=\{m \in \mathcal G(I) \mid \mbox{$x_i$ divides $m$}\}$.
Then $((\mathcal G_i),(\mathcal G(I) \setminus \mathcal G_i))$ is a partition of $I$ and is called the {\em $x_i$-partition} of $I$.
It was proved in \cite[Corollary 3.1]{FHV} that,
for edge ideals of graphs, $x_i$-partitions are always Betti splittings.
For cover ideals,
$x_i$-partitions are not always Betti splittings, but it is natural to ask when $x_i$-partitions give Betti splittings.
Our main result below is motivated by this natural question.
For a graph $G$ on $[n]$ and $v \in [n]$,
the set
\[
N_G(v)=\{u \in [n] \mid \{v,u\} \in E(G)\}
\]
is called the {\em neighbour} of $v$ in $G$.
We say that $W \subset [n]$ is an {\em independent set} of $G$ if $\{u,v\} \not \in E(G)$ for any $u,v \in W$.

\begin{theorem}
\label{thm:1-1}
Let $G$ be a simple graph on $[n]$ and $v \in [n]$.
If $N_G(v)$ is an independent set of $G$,
then the $x_v$-partition of $J(G)$ is a Betti splitting.
\end{theorem}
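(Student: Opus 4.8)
The plan is to peel off the structure of the $x_v$-partition, reduce the Betti-splitting property to a vanishing of certain multigraded Betti numbers, and then read those off a simplicial complex via Hochster's formula; the hypothesis on $N_G(v)$ enters at exactly one decisive point. Write $N=N_G(v)$, $x^N=\prod_{w\in N}x_w$, let $H=G\setminus(\{v\}\cup N)$ be the induced subgraph on the remaining vertices, and let $(K,L)$ be the $x_v$-partition of $J:=J(G)$; we may assume $v$ is not isolated, as otherwise $x_v$ divides no minimal generator of $J$ and the partition is the trivial one. A direct inspection of minimal vertex covers — for which $N$ need not be independent — yields: a minimal vertex cover of $G$ avoiding $v$ must contain $N$ and is of the form $N\cup C$ with $C$ a minimal vertex cover of $H$, and conversely, so $L=x^N J(H)$ (viewed inside $S$); a minimal vertex cover of $G$ containing $v$ is $\{v\}\cup A'$ with $A'$ a minimal vertex cover of $G\setminus v$ satisfying $N\not\subseteq A'$, so $K=x_v K'$ where $K'$ is the squarefree monomial ideal generated by these $x^{A'}$; and, comparing generators, $K\cap L=x_v\bigl(K'\cap(x^N)\bigr)$, where $(x^N)$ is the principal ideal generated by $x^N$. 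In particular every minimal generator of $K\cap L$ is divisible by $x_v x^N$.

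By the criterion of Francisco, H\`a and Van Tuyl \cite{FHV} — equivalently, because minimality of the mapping cone of $0\to K\cap L\to K\oplus L\to J\to 0$ is exactly what is needed — it suffices to show that the inclusion-induced map $\Tor_i(\Bbbk,K\cap L)\to\Tor_i(\Bbbk,K)\oplus\Tor_i(\Bbbk,L)$ vanishes for every $i$, and this can be checked in each $\mathbb Z^n$-degree $\bm a$ separately. Here $\Tor_i(\Bbbk,K\cap L)_{\bm a}=0$ unless $\{v\}\cup N\subseteq\mathrm{supp}\,\bm a$ (since $x_v x^N$ divides all its generators), while $\Tor_i(\Bbbk,L)_{\bm a}=0$ whenever $v\in\mathrm{supp}\,\bm a$ (since $L=x^N J(H)$ involves no $x_v$); hence the $L$-component is automatically zero. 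Using the isomorphism $\Tor_i(\Bbbk,K)_{\bm a}\cong\Tor_i(\Bbbk,K')_{\bm a-\mathbf{e}_v}$: if $\{v\}\cup N\not\subseteq\mathrm{supp}\,\bm a$ the source already vanishes, and if $\{v\}\cup N\subseteq\mathrm{supp}\,\bm a$ then the target equals $\Tor_i(\Bbbk,K')_{W}$ with $W=\mathrm{supp}\,\bm a\setminus\{v\}$, a set containing $N$. So the whole theorem reduces to the claim that $\beta_{i,W}(K')=0$ for every $i$ and every $W\subseteq V(G\setminus v)$ with $N\subseteq W$.

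This reduced claim is the crux, and it is the only place where independence of $N$ is used. I would prove it via Hochster's formula after identifying $K'$ as a Stanley--Reisner ideal: one checks that the minimal nonfaces of
\[
\Gamma\;=\;\{\, F\subseteq V(G\setminus v)\;:\; F\setminus\{w\}\ \text{is not a vertex cover of}\ G\setminus v\ \text{for every}\ w\in N \,\}
\]
are precisely the minimal vertex covers of $G\setminus v$ not containing $N$, so that $K'=I_\Gamma$; it then suffices to prove that the induced subcomplex $\Gamma[W]$ is $\Bbbk$-acyclic whenever $N\subseteq W$.

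For $w\in N$ set $\Gamma^w=\{F:F\setminus\{w\}\ \text{is not a vertex cover of}\ G\setminus v\}$, a subcomplex, so that $\Gamma=\bigcap_{w\in N}\Gamma^w$; when $N\subseteq W$, each $\Gamma^w[W]$ is a cone with apex $w$, because deleting $w$ from $F\cup\{w\}$ changes nothing. The decisive observation — and the one place the independence of $N$ is invoked — is that for any $S\subseteq N$ one has $\bigcup_{w\in S}\Gamma^w=\{F:F\setminus S\ \text{is not a vertex cover of}\ G\setminus v\}$: indeed, if $F\setminus\{w\}$ and $F\setminus\{w'\}$ are both vertex covers and $\{w,w'\}\notin E(G)$, then $F\setminus\{w,w'\}$ is a vertex cover as well, since any edge it missed would have both endpoints in $\{w,w'\}\subseteq N$; and this last complex is again a cone, with apex any element of $S$. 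An induction on $|N|$ now concludes: writing $\Gamma[W]=\bigl(\bigcap_{i<k}\Gamma^{w_i}[W]\bigr)\cap\Gamma^{w_k}[W]$, the first factor is $\Bbbk$-acyclic by the inductive hypothesis, the second is a cone, and their union equals $\bigcap_{i<k}\bigl(\Gamma^{w_i}[W]\cup\Gamma^{w_k}[W]\bigr)$ — an intersection of cones all having apex $w_k$, hence itself a cone — so the Mayer--Vietoris sequence forces $\widetilde H_*(\Gamma[W])=0$. The main obstacle is exactly this acyclicity statement: pinning down the complex $\Gamma$ and seeing how the independence of $N_G(v)$ collapses its restrictions. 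Once it is in hand, everything else is routine bookkeeping with vertex covers and multidegrees.
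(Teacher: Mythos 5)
Your proof is correct, and its skeleton is the same as the paper's: you decompose the $x_v$-partition as $K=x_vK'$ with $K'=J_N(G\setminus v)$ and $L=x^N J(G\setminus N)$, note that every generator of $K\cap L$ is divisible by $x_vx^N$ while $L$ involves no $x_v$, and thereby reduce the whole theorem to the single vanishing statement $\beta_{i,\mathbf e_W}(K')=0$ for all $i$ and all $W\supseteq N$. That statement is precisely the paper's Proposition \ref{prop:2-2} applied to $G\setminus v$ and $U=N$. Where you genuinely differ is in how you prove it. The paper writes $J_N(G\setminus v)=I_{\{w_1\}}+\cdots+I_{\{w_k\}}$, uses independence of $N$ to prove $I_{\{w\}}\cap I_{\{w'\}}=I_{\{w,w'\}}$, and runs an induction through Tor long exact sequences. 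You instead identify $K'$ as the Stanley--Reisner ideal of $\Gamma=\bigcap_{w\in N}\Gamma^w$ and prove acyclicity of $\Gamma[W]$ by Mayer--Vietoris, using independence to show that $\Gamma^w\cup\Gamma^{w'}$ is again a cone (a missed edge would have to lie inside $N$). These are the same argument in two languages: your $I_{\Gamma^w}$ is the paper's $I_{\{w\}}$, your union-of-complexes identity is the Stanley--Reisner translation of the paper's intersection identity \eqref{eq:2.2}, and Mayer--Vietoris is the topological shadow of the paper's Tor sequence. Your version makes the role of independence geometrically transparent; the paper's version avoids Hochster's formula, works uniformly in all multidegrees without separately discarding non-squarefree ones, and yields Proposition \ref{prop:2-2} in a form directly reusable for Theorem \ref{thm:1-3}. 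Two small points you should dispose of explicitly: (i) if $G\setminus v$ has no edges, then $\varnothing$ is a vertex cover of $G\setminus v$, $K'$ is the unit ideal and $\Gamma$ is void, so Hochster's formula does not apply; but then $K=(x_v)$ and the required vanishing in degrees $\mathbf e_W$ with $W\supseteq N\neq\varnothing$ is immediate; (ii) the assertion that $\Gamma^w[W]$ is a cone with apex $w$ tacitly uses that $\varnothing$ is not a vertex cover of $G\setminus v$, i.e.\ the same non-degeneracy hypothesis, so it belongs with the same caveat.
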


The assumption that $N_G(v)$ is an independent set is a little restrictive, but an advantage of the above theorem is that we can find a Betti splitting by only looking local information of a graph $G$ near the vertex $v$
without looking global information of $G$.
Also, this result generalizes \cite[Theorem 3.8]{FHV}, proving that a Cohen-Macaulay bipartite graph has a vertex $v$ such that the $x_v$-partition of $J(G)$ is a Betti splitting (see also Corollary \ref{cor}).
Since neighbours of vertices are always independent sets for bipartite graphs,
Theorem \ref{thm:1-1} actually tells the following.

\begin{corollary}
\label{cor:1-2}
If $G$ is a bipartite graph on $[n]$,
then the $x_v$-partition of $J(G)$ is a Betti splitting for any $v \in [n]$.
\end{corollary}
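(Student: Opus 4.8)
The plan is to obtain Corollary~\ref{cor:1-2} directly from Theorem~\ref{thm:1-1}, so the only thing to check is the hypothesis. Fix a bipartition $[n]=V_1\sqcup V_2$ of $G$ and let $v\in V_1$ (the case $v\in V_2$ is symmetric). Every edge of $G$ has one endpoint in each part, so $N_G(v)\subseteq V_2$; for the same reason no edge of $G$ is contained in $V_2$, hence none is contained in $N_G(v)$. Thus $N_G(v)$ is an independent set of $G$, and Theorem~\ref{thm:1-1} applies to $v$. So the corollary is a one-line consequence, and all the work sits in Theorem~\ref{thm:1-1}, which I sketch next.

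Write $N=N_G(v)$ and let $(K,L)$ be the $x_v$-partition of $J=J(G)$. The first step is to record the combinatorial structure of $K$, $L$ and $K\cap L$. A vertex cover of $G$ missing $v$ must contain $N$, and one checks that the minimal vertex covers of $G$ missing $v$ are exactly the sets $N\sqcup B$ with $B$ a minimal vertex cover of $H:=G-(\{v\}\cup N)$; hence $L=x^N\cdot J(H)$, so the minimal free resolution of $L$ is a multigraded shift of that of $J(H)$. Likewise the minimal vertex covers of $G$ containing $v$ are the sets $\{v\}\sqcup B$ with $B$ a minimal vertex cover of $G-v$ not containing $N$, so $K=x_v\cdot K_0$ for an explicit monomial ideal $K_0$; and $K\cap L=x_v\,x^N\cdot M$ for an explicit monomial ideal $M$. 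None of this yet uses that $N$ is independent.

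The heart of the proof is the Betti-splitting identity, which I would establish through the short exact sequence $0\to K\cap L\to K\oplus L\to J\to 0$ and the induced long exact sequence on $\Tor_i(\Bbbk,-)$: recall from \cite{FHV} that $(K,L)$ is a Betti splitting exactly when every connecting map $\Tor_i(\Bbbk,J)\to\Tor_{i-1}(\Bbbk,K\cap L)$ vanishes, equivalently when every map $\Tor_i(\Bbbk,K\cap L)\to\Tor_i(\Bbbk,K)\oplus\Tor_i(\Bbbk,L)$ is injective. I expect this injectivity to be the only real obstacle, and to be exactly where independence of $N$ enters: it keeps $M$ simple enough (e.g.\ no unexpected least common multiples appear among the generators of $K\cap L$) so that the $\Tor$ of $K\cap L$ injects into that of $K\oplus L$. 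Concretely I would either compare lcm-lattices, and hence Taylor-type complexes, of $K\cap L$, $K$, $L$ directly, or pass through Hochster's formula together with combinatorial Alexander duality and show that the relevant restrictions of the Alexander dual of the independence complex of $G$ split as a union of two subcomplexes glued along a cone, so that the Mayer--Vietoris connecting map vanishes in every homological degree. Everything outside this one injectivity statement is bookkeeping.
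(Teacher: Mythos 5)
Your derivation of the corollary itself is correct and is exactly the paper's argument: in a bipartite graph every neighbourhood $N_G(v)$ lies in one part of the bipartition and hence is an independent set, so Theorem \ref{thm:1-1} applies verbatim. If you treat Theorem \ref{thm:1-1} as a black box, you are done.

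Since you went on to sketch Theorem \ref{thm:1-1}, one correction and one comparison. The correction: your stated criterion for a Betti splitting is wrong. By \cite[Proposition 2.1]{FHV} (Lemma \ref{lem:3-1} in the paper), $(K,L)$ is a Betti splitting if and only if the maps $\varphi_i\colon \Tor_i(\Bbbk,K\cap L)\to\Tor_i(\Bbbk,K)\oplus\Tor_i(\Bbbk,L)$ are the \emph{zero} maps, not injective; equivalently the connecting maps $\Tor_i(\Bbbk,J)\to\Tor_{i-1}(\Bbbk,K\cap L)$ must be \emph{surjective}, not zero. Indeed, if $\varphi_i$ were injective and nonzero one would get $\beta_{i,j}(J)=\beta_{i,j}(K)+\beta_{i,j}(L)-\beta_{i,j}(K\cap L)$ rather than the Betti splitting identity. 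A proof built on establishing injectivity would therefore be proving the wrong thing. The comparison: the paper shows $\varphi_i=0$ not by analysing lcm-lattices or Mayer--Vietoris gluings, but by a multidegree support-disjointness argument. Every generator of $K\cap L$ is divisible by $x_vx^N$, so $\Tor_i(\Bbbk,K\cap L)_{\bm a}$ is supported on $\bm a\ge \mathbf e_v+\mathbf e_N$; on the other hand $\Tor_i(\Bbbk,L)_{\bm a}$ vanishes for $\bm a\ge\mathbf e_v$ (no generator of $L$ involves $x_v$), and --- this is where independence of $N$ enters, via the key Proposition \ref{prop:2-2} applied to $K=x_v\cdot J_N(G\setminus\{v\})$ --- $\Tor_i(\Bbbk,K)_{\bm a}$ vanishes for $\bm a\ge\mathbf e_N$. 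Hence source and target of $\varphi_i$ live in disjoint sets of multidegrees and $\varphi_i=0$ for free. Proposition \ref{prop:2-2} itself is proved by writing $J_U(G)$ as a sum of ideals $I_{\{u\}}$ and an induction using the identity $I_{\{u\}}\cap I_{\{v\}}=I_{\{u,v\}}$, which is where the independence hypothesis is actually used; it is not about keeping the generators of $K\cap L$ free of unexpected least common multiples.
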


In addition to Corollary \ref{cor:1-2},
our proof of Theorem \ref{thm:1-1} gives another interesting result on multigraded Betti numbers of cover ideals of bipartite graphs.
Recall that a graph $G$ on $[n]$ is {\em bipartite} if there is a partition $[n]=U \cup W$ such that $E(G) \subset \{\{u,w\} \mid u \in U,w \in W\}$.
We call the above partition $[n]=U\cup W$ a {\em bipartition} of $G$.
For a bipartite graph $G$ with a bipartition $[n]=U \cup W$,
we call a squarefree monomial ideal
\[
M=(x^{N_G(u)} \mid u \in U) \subset \Bbbk[x_w \mid w \in W]
\]
a squarefree monomial ideal associated with $G$ (w.r.t.\ a bipartition $[n]=U\cup W$)\footnote{The ideal $M'=(x^{N_G(w)}\mid w \in W)$ is also a squarefree monomial ideal associated with $G$.}.

\begin{theorem}
\label{thm:1-3}
Let $G$ be a bipartite graph on $[n]$
with a bipartition $[n]=\{1,2,\dots,m\} \cup\{m+1,\dots,n\}$
and let $M=(x^{N_G(m+1)},\dots,x^{N_G(n)}) \subset \Bbbk[x_1,\dots,x_m]$ be a squarefree monomial ideal associated with $G$.
If $G$ has no isolated vertices,
then one has
\[
\beta_{i,(\bm a,1,\dots,1)}(J(G))=
\beta_{i-1,\bm a}(M)
\ \ \ \mbox{for any $i \geq 1$ and $\bm a \in \mathbb Z_{\geq 0}^m$.}\]
\end{theorem}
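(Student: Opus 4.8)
The plan is to convert both sides into reduced simplicial homology via the Hochster‑type formula valid in arbitrary multidegrees: for a monomial ideal $I\subseteq\Bbbk[x_1,\dots,x_N]$ and $\bm b\in\mathbb Z_{\ge0}^{N}$ one has $\beta_{i,\bm b}(I)=\dim_\Bbbk\widetilde H_{i-1}\bigl(K^{\bm b}(I);\Bbbk\bigr)$, where $K^{\bm b}(I)=\{\sigma\subseteq\operatorname{supp}(\bm b)\mid x^{\bm b}/x^{\sigma}\in I\}$ is the upper Koszul complex (with $x^{\sigma}=\prod_{i\in\sigma}x_i$). Both $J(G)$ and $M$ are squarefree monomial ideals, so their multigraded Betti numbers vanish outside $0/1$ degrees; hence I may assume $\bm a=\bm e_{T}$ is the indicator vector of some $T\subseteq\{1,\dots,m\}$, and, writing $W=\{m+1,\dots,n\}$, the theorem reduces to the homological statement
\[
\widetilde H_{i-1}\bigl(K^{T\cup W}(J(G))\bigr)\;\cong\;\widetilde H_{i-2}\bigl(K^{T}(M)\bigr)\qquad(i\in\mathbb Z).
\]

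Next I would write down the two complexes. Since the monomials lying in $J(G)$ are exactly the (exponent vectors of the) vertex covers of $G$, and since in a bipartite graph both colour classes $\{1,\dots,m\}$ and $W$ are independent sets, a direct computation gives
\[
K^{T\cup W}(J(G))=\bigl\{\,F_1\cup F_2 \mid F_1\subseteq T,\ F_2\subseteq W,\ N_G(w)\subseteq T\setminus F_1\ \text{for every }w\in F_2\,\bigr\},
\]
while $K^{T}(M)=\{\,F_1\subseteq T\mid N_G(w)\subseteq T\setminus F_1\ \text{for some }w\in W\,\}$. Put $\mathcal W(F_1)=\{\,w\in W\mid N_G(w)\subseteq T\setminus F_1\,\}$, which is antitone in $F_1$, and let $\Sigma=K^{T\cup W}(J(G))$. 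Taking $F_2=\varnothing$ shows the full simplex $\sigma_T=2^{T}$ is a subcomplex of $\Sigma$; let $\Sigma'=\{\,F_1\cup F_2\in\Sigma\mid\mathcal W(F_1)\neq\varnothing\,\}$ be the other subcomplex. One checks at once that $\Sigma=\sigma_T\cup\Sigma'$ (a face with nonempty $W$-part lies in $\Sigma'$, one with empty $W$-part is a subset of $T$) and that $\sigma_T\cap\Sigma'=K^{T}(M)$.

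The heart of the proof is to show $\Sigma'$ is contractible. For each $w\in\mathcal W(\varnothing)$ set $C_w=\{\,F_1\cup F_2\in\Sigma\mid w\in\mathcal W(F_1)\,\}$. These are subcomplexes; they cover $\Sigma'$ (a face $F_1\cup F_2\in\Sigma'$ has $\mathcal W(F_1)\neq\varnothing$, and any $w\in\mathcal W(F_1)$ then lies in $\mathcal W(\varnothing)$ with $F_1\cup F_2\in C_w$); and each $C_w$ is a cone with apex $w$, since $w\in\mathcal W(F_1)$ and $F_2\subseteq\mathcal W(F_1)$ force $F_2\cup\{w\}\subseteq\mathcal W(F_1)$. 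More generally, for any nonempty $I\subseteq\mathcal W(\varnothing)$ the intersection $\bigcap_{w\in I}C_w=\{\,F_1\cup F_2\in\Sigma\mid I\subseteq\mathcal W(F_1)\,\}$ contains the vertex $\{w_0\}$ for every $w_0\in I$ and is a cone with apex $w_0$, hence nonempty and contractible. By the Nerve Lemma, $\Sigma'$ is homotopy equivalent to the nerve of $\{C_w\}_{w\in\mathcal W(\varnothing)}$; since all these intersections are nonempty, that nerve is a full simplex, so $\Sigma'$ is contractible whenever $\mathcal W(\varnothing)\neq\varnothing$. In that case $T\neq\varnothing$ as well (otherwise some $w\in W$ would be isolated, contradicting the hypothesis), so $\sigma_T$ is also contractible, and the Mayer--Vietoris sequence of $\Sigma=\sigma_T\cup\Sigma'$, whose intersection $K^{T}(M)$ is nonvoid, gives $\widetilde H_j(\Sigma)\cong\widetilde H_{j-1}(K^{T}(M))$ for all $j$ — precisely the displayed isomorphism. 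It remains to dispose of the degenerate cases ($\bm a$ not $0/1$, or $\mathcal W(\varnothing)=\varnothing$, the latter covering $T=\varnothing$ exactly because $G$ has no isolated vertex): there $\Sigma=\sigma_T$ and $K^{T}(M)$ is void, whence $\widetilde H_{i-1}(\Sigma)=\widetilde H_{i-2}(K^{T}(M))=0$ for all $i\ge1$, so both sides vanish.

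The step I expect to be the main obstacle is the contractibility of $\Sigma'$: unlike $\sigma_T$ it is not in general a cone over a single vertex, so one cannot avoid a genuine homotopy-theoretic argument, and the verification that every intersection $\bigcap_{w\in I}C_w$ is nonempty and contractible is exactly where the bipartiteness of $G$ — the fact that each $N_G(w)$ sits inside the independent set $\{1,\dots,m\}$ — is used in an essential way.
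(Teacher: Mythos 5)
Your proof is correct, and it takes a genuinely different route from the paper's. The paper proves this statement (as Theorem 3.4) purely homologically: it uses the partition $J(G)=(x^U)+J_U(G)$ with $U=\{m+1,\dots,n\}$, identifies $(x^U)\cap J_U(G)=x^U\cdot M$ via Lemma 2.3/Remark 2.5, and then collapses the long exact sequence of $\Tor$ in degrees $\bm a\geq \mathbf e_U$ using its key Proposition 2.2, which is itself proved by an inductive long-exact-sequence argument. You instead translate both sides into reduced homology of upper Koszul complexes, decompose $K^{T\cup W}(J(G))=\sigma_T\cup\Sigma'$ with $\sigma_T\cap\Sigma'=K^T(M)$, prove $\Sigma'$ contractible by covering it with the cones $C_w$ and invoking the Nerve Lemma, and finish with Mayer--Vietoris. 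I checked the details: the descriptions of the two Koszul complexes are correct, each $\bigcap_{w\in I}C_w$ is indeed a nonempty cone with apex any $w_0\in I$ (nonemptiness using $I\subseteq\mathcal W(\varnothing)$), and your handling of the degenerate cases correctly isolates where the no-isolated-vertices hypothesis enters (it guarantees $\mathcal W(\varnothing)\neq\varnothing\Rightarrow T\neq\varnothing$, so $\sigma_T$ is a genuine simplex, and it forces $T=\varnothing$ into the case where both sides vanish). It is worth observing that $\Sigma'$ is exactly the upper Koszul complex of $J_U(G)$ in degree $\mathbf e_{T\cup W}$, so your contractibility argument is a topological proof (indeed a strengthening, from acyclicity to contractibility) of precisely the instance of Proposition 2.2 that this theorem needs. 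The paper's proposition is more general---arbitrary graphs, arbitrary independent sets $U$, all degrees $\geq\mathbf e_U$---and is reused to prove Theorem 1.1, and the paper's route also yields the Betti-splitting assertion and the companion formula for $\bm a\not\geq\mathbf e_U$; your route is self-contained, avoids the splitting machinery entirely, and makes the homological shift from $\widetilde H_{i-1}$ to $\widetilde H_{i-2}$ geometrically transparent.
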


\begin{example}
\label{ex1.4}
Consider the bipartite graph $G$ below. \begin{center}
\includegraphics[scale=0.8]{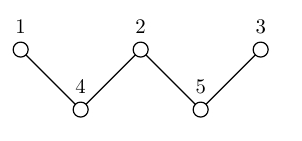}
\end{center}
Then 
\[
J(G)=(x_1x_2x_3,x_4x_5,x_1x_2x_5,x_2x_3x_4)
\ \ \mbox{ and } \  \ M=(x_1x_2,x_2x_3).\]
Multigraded Betti numbers of these ideals are given as follows:
{\footnotesize
\begin{align}
\label{0-0}
\beta_{i,\bm a}(J(G))
=
\begin{cases}
1 & \mbox{ if } (i,\bm a) \in 
 \left\{
\begin{array}{l}
\big(0,(1,1,1,0,0)\big),\big(0,(0,0,0,1,1)\big),\big(0,(1,1,0,0,1)\big),\\
\big(0,(0,1,1,1,0)\big),
\big(1,(1,1,1,1,0)\big),\big(1,(1,1,1,0,1)\big),\\ {\color{red}\big(1,(1,1,0,1,1)\big),
\big(1,(0,1,1,1,1)\big),\big(2,(1,1,1,1,1)\big)}
\end{array}
\right\},
\\
0 & \mbox{otherwise},
\end{cases}
\end{align}
}
and
\[
\beta_{i,\bm a}(M)
=
\begin{cases}
1 & \mbox{ if } (i,\bm a) \in 
 \left\{
\big(0,(1,1,0)\big),\big(0,(0,1,1)\big),\big(1,(1,1,1)\big)
\right\},
\\
0 & \mbox{otherwise}.
\end{cases}
\]
One can see that multigraded Betti numbers of $J(G)$ at the position $i$ and degree $(*,*,*,1,1)$ (see positions colored red in \eqref{0-0}) coincides with the multigraded Betti numbers of $M$ at the position $i-1$ and degree $(*,*,*)$.
\end{example}

Any squarefree monomial ideal can appear as a squarefree monomial ideal associated with a bipartite graph since the ideal
$M=(x^{A_1},\dots,x^{A_\ell}) \subset \Bbbk [x_1,\dots,x_m]$
is a squarefree monomial ideal associated with the bipartite graph $G$ on $[m+\ell]$ with $E(G)=\{\{j,m+i\} \mid i \in [\ell],\ j \in A_i\}$.
%Theorem \ref{thm:1-3} somehow tells that the multigraded Betti numbers of any squarefree monomial ideal can be embedded into multigraded Betti numbers of the cover ideal of a bipartite graph,
So the theorem tells that any strange behavior of multigraded Betti numbers of squarefree monomial ideals would also appear to multigraded Betti numbers of cover ideals of bipartite graphs.
We also note that Theorem \ref{thm:1-3} refines the result of Dalili and Kummini \cite[Theorem 4.7]{DK}, who proved that, with the same notation as in Theorem \ref{thm:1-3},
the Stanley--Reisner complex of the edge ideal of $G$ has the same homologies as the Alexander dual simplicial complex of $M$.
Via the Alexander duality and Hochster's formula (see e.g.\ \cite[Proposition 5.1.10 and Theorem 8.1.1]{HH}), this isomorphism of homologies can be considered as a special case of Theorem \ref{thm:1-3} when $\bm a=(1,1,\dots,1)$.

\section{A key proposition}

In this section, we prove a key proposition about multigraded Betti numbers of cover ideals, which will play a crucial role to prove Theorems \ref{thm:1-1} and \ref{thm:1-3}.

We actually consider splittings which are more general than $x_i$-partitions.
Let $G$ be a graph on $[n]$ and $U \subset [n]$.
We write
\[
J^U\!(G)=(x^W \mid W \mbox{ is a minimal vertex cover of $G$ with $W \supset U$})
\]
and
\[
J_U(G)=(x^W \mid W \mbox{ is a (minimal) vertex cover of $G$ with $W \not \supset U$}).
\]
We simply write $J^U\!(G)=J^v(G)$ and $J_U(G)=J_v(G)$ when $U=\{v\}$.
Clearly the pair $(J^U\!(G),J_U(G))$ is a splitting of $J(G)$.
Also, $(J^v(G),J_v(G))$ is nothing but the $x_v$-partition of $J(G)$.
We also write
\[
\tilde J^U\!(G)=(x^W \mid W \mbox{ is a vertex cover of $G$ with $W \supset U$}).
\]
If we write $G\setminus U$ for the graph obtained from $G$ by deleting all vertices  in $U$,
then we have
\[
\tilde J^U\!(G)=x^U \cdot J(G\setminus U),\]
where
we consider that $J(G\setminus U)$ is the ideal of $S$ not the ideal of $\Bbbk[x_v\mid v \in [n] \setminus U]$
and $J(G\setminus U)=S$ if $G\setminus U$ has no edges.
Note that 
$E(G\setminus U)=\{\{u,v\} \in E(G)\mid \{u,v\} \cap U= \varnothing\}$
and variables $x_u$ with $u \in U$ do not appear in $\mathcal G(J(G\setminus U))$.

We often use following easy facts on vertex covers.

\begin{lemma}
\label{lem:2-0}
Let $G$ be a graph on $[n]$,
$W$ a vertex cover of $G$
and $v \in [n]$.
\begin{itemize}
\item[(1)]
If $W \supset \{v\} \cup N_G(v)$ then $W \setminus \{v\}$ is also a vertex cover of $G$.
\item[(2)] If $W$ is a minimal vertex cover of $G$, then
$W$ does not contain $v$ if and only if $W \supset N_G(v)$.
\end{itemize}
\end{lemma}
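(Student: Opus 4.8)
The plan is to verify both parts directly from the definition of a vertex cover, the only real input being that $G$ is simple (so it has no loops).

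For part (1), I would let $W' = W \setminus \{v\}$ and check the covering condition edge by edge. Take an arbitrary edge $\{a,b\} \in E(G)$. If $v \notin \{a,b\}$, then since $W$ covers this edge we have $a \in W$ or $b \in W$, and as neither $a$ nor $b$ equals $v$, the same holds with $W$ replaced by $W'$. If instead $v \in \{a,b\}$, say $a = v$, then $b \in N_G(v) \subset W$; since $G$ is simple we have $b \neq v$, hence $b \in W'$. So $W'$ covers every edge, i.e.\ it is a vertex cover of $G$.

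For part (2), the forward implication is immediate: if $v \notin W$ and $u \in N_G(v)$, then $\{v,u\} \in E(G)$ is covered by $W$, and since $v \notin W$ we must have $u \in W$; thus $N_G(v) \subset W$. For the reverse implication, suppose $W \supset N_G(v)$ and assume for contradiction that $v \in W$. Then $W \supset \{v\} \cup N_G(v)$, so part (1) shows that $W \setminus \{v\}$ is again a vertex cover of $G$, contradicting the minimality of $W$. Hence $v \notin W$.

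There is no real obstacle here; the only point requiring a moment's care is invoking that $G$ is a simple graph to guarantee that the neighbour of $v$ along an edge at $v$ is genuinely distinct from $v$, which is what makes the argument in part (1) go through.
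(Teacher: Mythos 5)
Your proof is correct; the paper states this lemma without proof as an ``easy fact,'' and your direct edge-by-edge verification of (1) together with the covering/minimality argument for the two directions of (2) is exactly the standard argument the authors are implicitly relying on.
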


The ideals $J^v(G)$ and $J_v(G)$ have the following another expressions.

\begin{lemma}
\label{lem:2-1}
Let $G$ be a graph on $[n]$,
$v \in [n]$ and $N=N_G(v)$.
Then
\begin{enumerate}
\item $J^v(G)=J_N(G)=x_v \cdot J_N(G\setminus \{v\})$.
\item $J_v(G)=J^N\!(G)=\tilde J^N\!(G)$.
\end{enumerate}
\end{lemma}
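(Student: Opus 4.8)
The plan is to read off, in each case, which (minimal) vertex covers generate each of the ideals and to match them up by elementary bijections; the only genuinely combinatorial input is Lemma~\ref{lem:2-0}(2), which says that a minimal vertex cover $W$ of $G$ satisfies $v \notin W$ if and only if $N \subseteq W$, equivalently $v \in W$ if and only if $N \not\subseteq W$. I will also use that in the definitions of $J(G)$ and $J_U(G)$ it is harmless to allow all vertex covers rather than only the minimal ones — any vertex cover contains a minimal one with the same defining property — whereas the analogous statement for $J^U\!(G)$ versus $\tilde J^N\!(G)$ is precisely (part of) what must be proved.

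For part~(1): the equality $J^v(G) = J_N(G)$ is immediate from Lemma~\ref{lem:2-0}(2), since it identifies the minimal vertex covers $W$ of $G$ with $v \in W$ with those satisfying $N \not\subseteq W$. For $J^v(G) = x_v \cdot J_N(G\setminus\{v\})$ I would set up the bijection $W \mapsto W\setminus\{v\}$ between the minimal vertex covers $W$ of $G$ with $v \in W$ and the minimal vertex covers $W'$ of $G\setminus\{v\}$ with $N \not\subseteq W'$, whose inverse is $W' \mapsto W'\cup\{v\}$. That these maps produce vertex covers is routine, using the partition of $E(G)$ into the edges at $v$ (covered exactly by $N$, and also by $v$) and the edges avoiding $v$ (which are the edges of $G\setminus\{v\}$); minimality transfers in both directions by the same dichotomy — removing a vertex $u\neq v$ from one cover would exhibit a removable vertex of the other, while removing $v$ from $W'\cup\{v\}$ would force $N\subseteq W'$. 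Since $v\notin N$ one has $x^W = x_v\cdot x^{W\setminus\{v\}}$, so the bijection gives the stated equality of ideals, whence the three-term chain of~(1).

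For part~(2): $J_v(G) = J^N\!(G)$ is again immediate from Lemma~\ref{lem:2-0}(2), which identifies the minimal vertex covers avoiding $v$ with those containing $N$. In $J^N\!(G) = \tilde J^N\!(G)$ the inclusion $\subseteq$ is trivial. For $\supseteq$, given a vertex cover $W \supseteq N$ of $G$, I would delete vertices of $W\setminus N$ one at a time, as long as a vertex cover of $G$ remains, arriving at some $W^*$ with $N \subseteq W^* \subseteq W$ having no removable vertex outside $N$; then $x^{W^*}\mid x^W$, and it remains to check that $W^*$ is a \emph{minimal} vertex cover, i.e.\ that no $w\in N$ is removable either. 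If $w\in N$ and $W^*\setminus\{w\}$ were a vertex cover, the edge $\{v,w\}$ forces $v\in W^*$; but then $W^*\setminus\{v\}$ is still a vertex cover (the edges at $v$ being covered by $N$), contradicting the choice of $W^*$ since $v\notin N$.

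The crux is this last step — the inclusion $\tilde J^N\!(G)\subseteq J^N\!(G)$, that a vertex cover containing $N=N_G(v)$ can be shrunk to a minimal vertex cover still containing all of $N$. A priori the minimal sub-cover furnished by general nonsense need not retain $N$, and it is exactly here that one uses that $N$ is the full neighbourhood of a single vertex $v$ (so the edges at $v$ are redundantly covered by every element of $N$) together with $v\notin N$; for an arbitrary subset in place of $N$ the statement fails. Everything else is bookkeeping with the split of $E(G)$ into edges meeting $v$ and edges avoiding it, and I would also note in passing that the degenerate case $N=\varnothing$ (i.e.\ $v$ isolated) is consistent: both sides of~(1) are $(0)$, and all three members of~(2) equal $J(G)$.
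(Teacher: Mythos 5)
Your proof is correct, and for part (1) it is essentially the paper's argument: both rest on the correspondence $W \leftrightarrow W\setminus\{v\}$ together with Lemma~\ref{lem:2-0}(2); you additionally verify that minimality transfers under this correspondence, which is not needed for the equality of ideals but does no harm. For part (2) your route to the only nontrivial inclusion $\tilde J^N\!(G)\subseteq J^N\!(G)$ differs from the paper's. The paper rewrites $\tilde J^N\!(G)=x^N\cdot J(G\setminus N)$ and shows this is contained in $J_v(G)$ by observing that a minimal vertex cover $W$ of $G\setminus N$ cannot contain $v$ (as $v$ is isolated in $G\setminus N$), so $N\cup W$ is a vertex cover of $G$ avoiding $v$; it then invokes $J_v(G)=J^N\!(G)$. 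You instead argue directly inside $G$: greedily shrink a vertex cover $W\supseteq N$ to a $W^*$ with no removable vertex outside $N$, and then rule out removable vertices inside $N$ by noting that removability of $w\in N$ would force $v\in W^*$, whence $v$ itself would be removable by Lemma~\ref{lem:2-0}(1). Both arguments turn on the same combinatorial point --- once $N=N_G(v)$ lies in a cover, the vertex $v$ is redundant --- and you correctly identify this as the crux and the reason the claim fails for arbitrary subsets in place of $N$. The paper's version is shorter because the passage to $G\setminus N$ makes the exclusion of $v$ automatic; yours is self-contained in $G$ and produces an explicit minimal cover dividing the given one.
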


\begin{proof}
The equations $J^v(G)=J_N(G)$ and $J_v(G)=J^N\!(G)$ immediately follow from Lemma \ref{lem:2-0}(2).

We have $J^v(G) \subset x_v\cdot J_N(G\setminus \{v\})$ since, for any minimal vertex cover $W$ of $G$ with $v \in W$,
one has $W \not \supset N$ by Lemma \ref{lem:2-0}(2)
and $W\setminus \{v\}$ is a vertex cover of $G\setminus \{v\}$.
Also, we have $J_N(G) \supset x_v\cdot J_N(G\setminus \{v\})$ since if $W \subset [n] \setminus \{v\}$ is a vertex cover of $G\setminus \{v\}$ with $W\not \supset N$,
then $\{v\} \cup W$ is a vertex cover of $G$ that does not contain $N$.
These prove (1).

It is clear that $J^N\!(G) \subset \tilde J^N\!(G)=x^N \cdot J(G \setminus N)$, so to prove (2) it suffices to prove $x^N \cdot J(G \setminus N) \subset J_v(G)$.
But if $W$ is a minimal vertex cover of $G\setminus N$,
then $v \not \in W$ since $v$ is an isolated vertex of $G\setminus N$ and $N \cup W$ is a vertex cover of $G$.
This proves the desired inclusion.
\end{proof}

We now prove our key proposition.
For $\bm a=(a_1,\dots,a_n),
\bm b=(b_1,\dots,b_n) \in \mathbb Z^n$,
we write $\bm a \geq \bm b$ if $a_i \geq b_i$ for all $i$.
Also, for a subset $W \subset [n]$,
we write $\mathbf e_W=\sum_{i\in W} \mathbf e_i$.

\begin{proposition}
\label{prop:2-2}
Let $G$ be a graph on $[n]$ and $U \subset [n]$ an independent set of $G$.
Then one has
\[
\Tor_i(\Bbbk,J_U(G))_{\bm a}=0
\ \ \mbox{ for all }i \in \mathbb Z_{\geq 0} \mbox{ and } \bm a \in \mathbb Z^n_{\geq 0} \mbox{ with }\bm a \geq \mathbf e_U.
\]
\end{proposition}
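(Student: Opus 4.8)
The plan is to induct on $|U|$, peeling off one vertex of $U$ at a time via a Mayer--Vietoris short exact sequence. The engine of the argument is the elementary \emph{absence principle}: if a variable $x_v$ divides no element of $\mathcal G(I)$ for a monomial ideal $I\subseteq S$, then $\beta_{i,\bm a}(I)=0$ whenever $a_v\ge 1$. (Such an $I$ equals $I'S$ for a monomial ideal $I'$ of $S'=\Bbbk[x_j\mid j\ne v]$; tensoring a minimal $S'$-free resolution of $I'$ with $S$ yields a minimal $S$-free resolution of $I$, all of whose free generators sit in $\mathbb Z^n_{\ge 0}$-degrees with $v$-th coordinate zero.) The only input about the graph that I will need, used several times, is: \emph{if $v\notin N$ and $N\supseteq N_G(v)$, then $x_v$ divides no element of $\mathcal G(\tilde J^N\!(G))$} --- this is immediate from the identity $\tilde J^N\!(G)=x^N\cdot J(G\setminus N)$ recorded above, since $v\notin N$ and $v$ is an isolated vertex of $G\setminus N$.

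For the base case $U=\{v\}$, Lemma~\ref{lem:2-1}(2) gives $J_v(G)=\tilde J^{N_G(v)}\!(G)$; taking $N=N_G(v)$ in the input above (so $v\notin N$ and $N\supseteq N_G(v)$ trivially), the absence principle yields $\Tor_i(\Bbbk,J_v(G))_{\bm a}=0$ for all $\bm a$ with $a_v\ge 1$, and in particular for all $\bm a\ge\mathbf e_{\{v\}}$. For the inductive step, take $|U|\ge 2$, fix $v\in U$, and put $U'=U\setminus\{v\}$. A minimal vertex cover $W$ satisfies $W\not\supseteq U$ precisely when $v\notin W$ or $W\not\supseteq U'$, so $\mathcal G(J_U(G))=\mathcal G(J_v(G))\cup\mathcal G(J_{U'}(G))$ and thus $J_U(G)=J_v(G)+J_{U'}(G)$. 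From the short exact sequence
\[
0\longrightarrow J_v(G)\cap J_{U'}(G)\longrightarrow J_v(G)\oplus J_{U'}(G)\longrightarrow J_U(G)\longrightarrow 0
\]
I get a long exact sequence in $\Tor_\bullet(\Bbbk,-)$ of $\mathbb Z^n$-graded modules. For $\bm a\ge\mathbf e_U$ one has $a_v\ge 1$, so $\Tor_i(\Bbbk,J_v(G))_{\bm a}=0$ by the base case, and $\bm a\ge\mathbf e_{U'}$, so $\Tor_i(\Bbbk,J_{U'}(G))_{\bm a}=0$ by the inductive hypothesis ($U'$ is again an independent set). Hence the long exact sequence reduces the claim to the vanishing of $\Tor_{i-1}(\Bbbk,J_v(G)\cap J_{U'}(G))_{\bm a}$ for $\bm a\ge\mathbf e_U$ (and of $\Tor_0(\Bbbk,J_U(G))_{\bm a}$, which is immediate from $J_U(G)=J_v(G)+J_{U'}(G)$).

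This is where the independence of $U$ is used. From $\mathcal G(J_{U'}(G))=\bigcup_{u'\in U'}\mathcal G(J_{u'}(G))$ together with Lemma~\ref{lem:2-1}(2) we have $J_{U'}(G)=\sum_{u'\in U'}\tilde J^{N_G(u')}\!(G)$; since for monomial ideals intersection distributes over sums and $\tilde J^A\!(G)\cap\tilde J^B\!(G)=\tilde J^{A\cup B}\!(G)$, this gives
\[
J_v(G)\cap J_{U'}(G)=\sum_{u'\in U'}\tilde J^{\,N_G(v)\cup N_G(u')}\!(G).
\]
Because $U$ is independent, $v\notin N_G(u')$ for every $u'\in U'$, and $v\notin N_G(v)$ always, so $v\notin N_G(v)\cup N_G(u')$ while trivially $N_G(v)\cup N_G(u')\supseteq N_G(v)$; hence the graph-theoretic input applies to each summand, and $x_v$ divides no element of $\mathcal G(\tilde J^{N_G(v)\cup N_G(u')}\!(G))$. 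Since $\mathcal G(\sum_k I_k)\subseteq\bigcup_k\mathcal G(I_k)$ for monomial ideals, $x_v$ divides no element of $\mathcal G(J_v(G)\cap J_{U'}(G))$, and the absence principle gives $\Tor_{i-1}(\Bbbk,J_v(G)\cap J_{U'}(G))_{\bm a}=0$. This completes the induction.

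I expect the main obstacle to be pinning down the right closed form for the intersection $J_v(G)\cap J_{U'}(G)$ --- the displayed expression as a sum of the ideals $\tilde J^{N_G(v)\cup N_G(u')}\!(G)$ --- and then observing that the hypothesis that $U$ is an independent set is exactly what guarantees $v\notin N_G(v)\cup N_G(u')$, the single point on which everything turns; the monomial-ideal bookkeeping and the long exact sequence chase are routine. A little care is also warranted around degenerate situations (isolated vertices, edgeless subgraphs where $J(G\setminus N)=S$, or some of the ideals being zero or the whole ring), but none of these causes genuine trouble.
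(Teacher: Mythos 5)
Your proof is correct and follows essentially the same route as the paper's: both decompose $J_U(G)$ as the sum of the ideals $J_u(G)$ ($u\in U$), induct by adding one summand at a time, and use the Mayer--Vietoris long exact sequence together with the observation that the intersection with the new summand has no generator divisible by the new variable, which is exactly where independence of $U$ enters. The only (cosmetic) difference is how the intersection is computed: the paper proves $I_{\{u\}}\cap I_{\{v\}}=I_{\{u,v\}}$ by an lcm argument, while you use $\tilde J^A\!(G)\cap\tilde J^B\!(G)=\tilde J^{A\cup B}\!(G)$ and distributivity of intersection over sums of monomial ideals.
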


\begin{proof}
Without loss of generality we may assume $U=\{1,2,\dots,\ell\}$ for some $\ell \in [n]$.
For any $T \subset U$, we define
\[
I_T=( x^W \mid \mbox{ $W$ is a (minimal) vertex cover of $G$ with $W \subset [n]\setminus T$}).
\]
We note that
$J_U(G)=I_{\{1\}}+I_{\{2\}}+ \cdots + I_{\{\ell\}}.$

We first claim
\begin{align}
\label{eq:2.2}
I_{\{u\}} \cap I_{\{v\}}=I_{\{u,v\}}
\ \ \mbox{ for }u,v \in U.
\end{align}
The inclusion ``$\supset$" in \eqref{eq:2.2} is clear.
We prove the opposite inclusion.
Let $x^A \in \mathcal G(I_{\{u\}})$ and $x^B \in \mathcal G(I_{\{v\}})$.
What we must prove is
$\mathrm{lcm}(x^A,x^B) \in I_{\{u,v\}}$.
Since $A$ and $B$ are vertex covers of $G$ with $u \not \in A$ and $v \not \in B$,
we have
\[
N_G(u) \subset A \mbox{ and } N_G(v) \subset B.
\]
Since $U$ is an independent set of $G$ we have $\{u,v\} \cap (N_G(u)\cup N_G(v))=\varnothing$, so the above inclusions and Lemma \ref{lem:2-0}(1) tell
$C=(A\cup B) \setminus \{u,v\}$ is a vertex cover of $G$.
(We note that $A\cup B$ may not contain $u$ or $v$.)
This tells $\mathrm{lcm}(x^A,x^B)=x^{A \cup B} \in I_{\{u,v\}}$
since it is divisible by $x^C \in I_{\{u,v\}}$.

We now prove the desired vanishing statement for $\Tor_i(\Bbbk,J_U(G))$.
Let
\[
L_{\{k\}}=I_{\{1\}}+\cdots+I_{\{k\}}
\]
for $k=1,2,\dots,\ell$.
We claim that
\begin{align}
\label{eq:2-3}
\Tor_i(\Bbbk,L_{\{k\}})_{\bm a}=0
\ \ \mbox{ for all }i \in \mathbb Z_{\geq 0} \mbox{ and } \bm a \in \mathbb Z^n_{\geq 0} \mbox{ with } \bm a \geq \mathbf e_1 + \cdots + \mathbf e_k.
\end{align}
Note that $L_{\{\ell\}}=J_U(G)$ so \eqref{eq:2-3} proves the proposition.

We prove \eqref{eq:2-3} by induction on $k$.
The case when $k=1$ is obvious since $L_{\{1\}}=I_{\{1\}}$ contains no generators which are divisible by $x_1$.
Suppose $k \geq 2$.
The short exact sequence 
$0 \to L_{\{k-1\}} \cap I_{\{k\}} \to L_{\{k-1\}} \oplus I_{\{k\}} \to L_{\{k\}} \to 0$
induces the long exact sequence
{\small
\begin{align*}
%\label{eq:long}
\begin{array}{l}
\cdots\!
\to \Tor_i(\Bbbk,L_{\{k-1\}})_{\bm a}
\!\oplus\!
\Tor_i(\Bbbk,I_{\{k\}})_{\bm a}
\!\to \!
\Tor_i(\Bbbk,L_{\{k\}})_{\bm a}
\!\to\!
\Tor_{i-1}(\Bbbk,L_{\{k-1\}}\!\cap\! I_{\{k\}}
)_{\bm a}
\!\to\! \cdots
\end{array}
\end{align*}
}
\hspace{-6pt}
for any $\bm a \in \mathbb Z^n_{\geq 0}$.
By the induction hypothesis, we have
\begin{align}
\label{eq:2.4}
\Tor_i(\Bbbk,L_{\{k-1\}})_{\bm a}=0 \ \ 
\mbox{ 
%for all $i \in \mathbb Z_{\geq 0}$ and 
if $\bm a \geq \mathbf e_1 + \cdots + \mathbf e_{k-1}$}.
\end{align}
Also, \eqref{eq:2.2} tells 
$L_{\{k-1\}} \cap I_{\{k\}}=\sum_{j=1}^{k-1} (I_{\{j\}} \cap I_{\{k\}})=\sum_{j=1}^{k-1}I_{\{j,k\}}$ has no generators which is divisible by $x_k$, so we have
\begin{align}
\label{eq:2.5}
\Tor_i(\Bbbk,L_{\{k-1\}} \cap I_{\{k\}})_{\bm a}=\Tor_i(\Bbbk,I_{\{k\}})_{\bm a}=0 \ \ 
\mbox{
% for all $i \in \mathbb Z_{\geq 0}$ and 
if $\bm a \geq \mathbf e_k$}.
\end{align}
Applying \eqref{eq:2.4} and \eqref{eq:2.5} to the long exact sequence of $\Tor$,
we get the desired property \eqref{eq:2-3}.
\end{proof}

To consider Betti splittings of the form $(J^U\!(G),J_U(G))$,
it is convenient to know the ideal $J^U\!(G)\cap J_U(G)$.
While we do not have a simple formula for this intersection in general,
we close this section by showing the following formula for $\tilde J^U\!(G) \cap J_U(G)$.

\begin{lemma}
\label{lem:2-3}
Let $G$ be a graph on $[n]$ and $U \subset [n]$.
Then
\begin{align}
\label{eq:2-7}
(x^U) \cap J_U(G)=\tilde J^U\!(G) \cap J_U(G)= \sum_{u \in U}x^{U \cup N_G(u)} \cdot J\big(G\setminus (U \cup N_G(u))\big).
\end{align}
\end{lemma}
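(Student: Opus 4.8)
The plan is to prove the two equalities in \eqref{eq:2-7} as a cycle of three inclusions among monomial ideals. Write $P=(x^U)\cap J_U(G)$, $Q=\tilde J^U\!(G)\cap J_U(G)$, and $R=\sum_{u\in U}x^{U\cup N_G(u)}\cdot J\big(G\setminus(U\cup N_G(u))\big)$; I would show $Q\subseteq P$, $P\subseteq R$, and $R\subseteq Q$, each by checking containments of monomials. Throughout I would use the elementary fact that if $W$ is a vertex cover of $G$ and $V\subseteq[n]$ then $W\setminus V$ is a vertex cover of $G\setminus V$, since an edge of $G\setminus V$ is disjoint from $V$ and hence is covered by a vertex of $W$ lying outside $V$. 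The inclusion $Q\subseteq P$ is immediate: every monomial generator $x^W$ of $\tilde J^U\!(G)$ has $W\supseteq U$, so $\tilde J^U\!(G)\subseteq(x^U)$.

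For $P\subseteq R$, I would take a monomial $m\in(x^U)\cap J_U(G)$. Since $m\in J_U(G)$, it is divisible by $x^W$ for some minimal vertex cover $W$ of $G$ with $W\not\supseteq U$; fix $u\in U\setminus W$, so that $N_G(u)\subseteq W$ by Lemma \ref{lem:2-0}(2). Since $x^U$ and $x^W$ both divide $m$ and $N_G(u)\subseteq W$, the monomial $m$ is divisible by $x^{U\cup W}=x^{U\cup N_G(u)}\cdot x^{W\setminus(U\cup N_G(u))}$, and hence $m/x^{U\cup N_G(u)}$ is divisible by $x^{W\setminus(U\cup N_G(u))}$. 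By the fact above, $W\setminus(U\cup N_G(u))$ is a vertex cover of $G\setminus(U\cup N_G(u))$, so $x^{W\setminus(U\cup N_G(u))}\in J\big(G\setminus(U\cup N_G(u))\big)$ (which equals $S$ when that subgraph is edge-free). Therefore $m/x^{U\cup N_G(u)}\in J\big(G\setminus(U\cup N_G(u))\big)$, whence $m\in R$.

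For $R\subseteq Q$, I would check that each monomial generator $x^{U\cup N_G(u)}\cdot x^{W'}$, with $W'$ a minimal vertex cover of $G\setminus(U\cup N_G(u))$ (taking $W'=\varnothing$ when that graph has no edges), lies in both $\tilde J^U\!(G)$ and $J_U(G)$. It lies in $\tilde J^U\!(G)=x^U\cdot J(G\setminus U)$ because $x^{U\cup N_G(u)}x^{W'}=x^U\cdot x^{(N_G(u)\cup W')\setminus U}$ and $(N_G(u)\cup W')\setminus U$ is a vertex cover of $G\setminus U$: an edge of $G\setminus U$ not covered by $W'$ meets $U\cup N_G(u)$, and since it avoids $U$ it meets $N_G(u)$. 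It lies in $J_U(G)$ because $U\cup N_G(u)\cup W'$ is a vertex cover of $G$ (edges meeting $U\cup N_G(u)$ are covered, the others by $W'$), so, as it contains $\{u\}\cup N_G(u)$, Lemma \ref{lem:2-0}(1) shows $W:=(U\cup N_G(u)\cup W')\setminus\{u\}$ is a vertex cover of $G$; since $u\notin W$ we have $W\not\supseteq U$, so $x^W\in J_U(G)$, and $x^W$ divides $x^{U\cup N_G(u)}x^{W'}$.

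The only step carrying real content is $P\subseteq R$; the main things to get right there are the set identities behind the monomial factorizations---for instance that $U\cup W$ is the disjoint union of $U\cup N_G(u)$ and $W\setminus(U\cup N_G(u))$, which uses $N_G(u)\subseteq W$---and the correct behaviour of restrictions of vertex covers to deleted subgraphs. The degenerate cases---when $G\setminus(U\cup N_G(u))$ or $G\setminus U$ has no edges, or when $U$ and $N_G(u)$ overlap---need a moment's thought but present no genuine obstacle once one reads each $x^A$ as the squarefree monomial on $A$ and uses the convention $J(H)=S$ for an edge-free graph $H$.
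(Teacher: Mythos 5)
Your proposal is correct and follows essentially the same route as the paper: the same cycle of three inclusions (the trivial $\tilde J^U\!(G)\cap J_U(G)\subseteq (x^U)\cap J_U(G)$, then $(x^U)\cap J_U(G)$ into the sum via choosing $u\in U$ outside a minimal vertex cover and using Lemma \ref{lem:2-0}(2), then the sum back into $\tilde J^U\!(G)\cap J_U(G)$ via Lemma \ref{lem:2-0}(1)). The only difference is that you spell out the monomial factorizations and degenerate cases in more detail than the paper does.
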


\begin{proof}
It is clear that the first term contains the second term in \eqref{eq:2-7}.
Also, if $W \subset [n] \setminus (U \cup N_G(u))$ is a vertex cover of $G \setminus (U \cup N_G(u))$ then $W'=W \cup U \cup N_G(u)$ is a vertex cover of $G$ containing $U$ and by Lemma \ref{lem:2-0}(1) the set $W' \setminus \{u\}$ is also a vertex cover of $G$.
This proves that the second term contains the third term in \eqref{eq:2-7}.
We prove that the first term is contained in the third term in \eqref{eq:2-7}.
Let $x^A =x^U \cdot x^W \in (x^U) \cap J_U(G)$.
Then, by the definition of $J_U(G)$,
there is a $u \in U$ such that $x^{A \setminus \{u\}}=x^{U \setminus \{u\}} x^W \in J(G)$.
Since $A \setminus \{u\}$ is a vertex cover of $G$, we have $N_G(u) \subset A \setminus \{u\}$ and it follows that $A \setminus (U \cup N_G(u))$ is a vertex cover of $G\setminus (U \cup N_G(u))$.
This completes the proof.
\end{proof}

\begin{remark}
\label{rem:2-4}
Another expression of the first term of \eqref{eq:2-7} is
\[
(x^U) \cap J_U(G)=\left(x^{U \cup W}\mid \mbox{ $W$ is a vertex cover of $G$ with $W \not \supset U$}\right).
\]
\end{remark}

\begin{remark}
\label{rem:2-5}
If $G$ is a bipartite graph on $[n]$ with a bipartition $[n]=U \cup W$
and if no elements in $U$ are isolated vertices, 
then $J^U\!(G)=(x^U)$ and Lemma \ref{lem:2-3} tells
\[
J^U\!(G)\cap J_U(G)=x^U \cdot \left (x^{N_G(u)} \mid u \in U \right)
\]
since $G\setminus U$ has no edges (which implies $J(G\setminus (U\cup N_G(u)))=S$ for any $u \in U$).
The ideal $(x^{N_G(u)} \mid u \in U)$ appearing in the above equation is essentially a monomial ideal associated with $G$ given in the introduction.
\end{remark}

\section{Two Betti splittings}

In this last section, we prove Theorems \ref{thm:1-1} and \ref{thm:1-3}.

We first recall a criterion of Betti splittings,
which we will use.
Let $(K,L)$ be a partition of a monomial ideal $J \subset S$.
Then the short exact sequence $0 \to K \cap L \to K \oplus L \to J \to 0$ induces the long exact sequence
\begin{align}
\label{eq:3.1}
\cdots
\to \Tor_i(\Bbbk,K \cap L)
\stackrel {\varphi_i} \longrightarrow \Tor_i(\Bbbk,K )\oplus \Tor_i(\Bbbk,L) 
\to \Tor_i(\Bbbk,J) \to \cdots.
\end{align}
A Betti splitting can be characterized as follows.

\begin{lemma}[{\cite[Proposition 2.1]{FHV}}]
\label{lem:3-1}
With the same notation as above,
$(K,L)$ is a Betti splitting if and only if $\varphi_i$ is the zero map for all $i$.
\end{lemma}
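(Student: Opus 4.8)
The plan is to extract from the long exact sequence \eqref{eq:3.1} a dimension identity that holds for \emph{every} partition $(K,L)$ of $J$, and then compare it with the defining equality of a Betti splitting. Fix $i\geq 1$ and a degree $j$, and write $r_i$ for the rank of the degree-$j$ component of $\varphi_i$; since $\varphi_i$ is a map of graded vector spaces, it is the zero map precisely when $r_i=0$ for all $j$. Exactness of \eqref{eq:3.1} in degree $j$ at $\Tor_i(\Bbbk,K)_j\oplus\Tor_i(\Bbbk,L)_j$ shows that the image of this space in $\Tor_i(\Bbbk,J)_j$ has dimension $\beta_{i,j}(K)+\beta_{i,j}(L)-r_i$, because its kernel is $\Image\varphi_i$; exactness at $\Tor_{i-1}(\Bbbk,K\cap L)_j$ shows that the image of $\Tor_i(\Bbbk,J)_j$ in $\Tor_{i-1}(\Bbbk,K\cap L)_j$ equals $\Ker\varphi_{i-1}$, of dimension $\beta_{i-1,j}(K\cap L)-r_{i-1}$. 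Since the first of these images is the kernel of the map onto the second (exactness at $\Tor_i(\Bbbk,J)_j$), adding them gives
\[
\beta_{i,j}(J)=\big(\beta_{i,j}(K)+\beta_{i,j}(L)-r_i\big)+\big(\beta_{i-1,j}(K\cap L)-r_{i-1}\big)
\]
for every $i\geq 1$ and every $j$.

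Both implications then follow immediately. If $\varphi_i$ is the zero map for every $i\geq 0$, then all $r_i=0$ and the displayed identity becomes exactly $\beta_{i,j}(J)=\beta_{i,j}(K)+\beta_{i,j}(L)+\beta_{i-1,j}(K\cap L)$ for all $i\geq1$ and all $j$, so $(K,L)$ is a Betti splitting. Conversely, if $(K,L)$ is a Betti splitting, then subtracting the Betti-splitting equality from the displayed identity yields $r_i+r_{i-1}=0$ for every $i\geq1$ and every $j$; since $r_i,r_{i-1}\geq 0$, this forces $r_i=0$ for all $i\geq 0$ and all $j$, i.e.\ $\varphi_i=0$ for all $i$.

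I do not expect a genuine obstacle. The only things requiring care are bookkeeping: all the Betti numbers involved are finite (finitely generated graded modules over the Noetherian ring $S$), so the rank--nullity counts are legitimate, and the whole argument proceeds one degree $j$ at a time. The one non-formal input is the elementary observation that a sum of two nonnegative integers vanishes only when both summands vanish, which is precisely what converts the numerical Betti-splitting identity into the vanishing of the connecting homomorphisms $\varphi_i$. (It is also worth noting, though not needed, that for a partition the map $\varphi_0$ is automatically zero: each minimal generator of $K\cap L$ is the least common multiple of a generator of $K$ and a generator of $L$, hence — by the disjointness and minimality built into the definition of a partition — lies in $\mathfrak m K$ and in $\mathfrak m L$, so its class dies in $\Tor_0(\Bbbk,K)\oplus\Tor_0(\Bbbk,L)$.)
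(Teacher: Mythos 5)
Your proof is correct: the rank count on the long exact sequence \eqref{eq:3.1} gives $\beta_{i,j}(J)=\beta_{i,j}(K)+\beta_{i,j}(L)+\beta_{i-1,j}(K\cap L)-r_i-r_{i-1}$, from which both implications follow, and your side remark that $\varphi_0$ vanishes automatically for a partition is also right. The paper itself supplies no argument here (it simply cites \cite[Proposition 2.1]{FHV}), and your proof is essentially the standard one given in that reference, so there is nothing further to compare.
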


We also need the following fact.

\begin{lemma}
\label{lem:3-2}
Let $G$ be a graph on $[n]$ and $U \subset [n]$.
Then
\[
\Tor_i(\Bbbk,J(G))_{\bm a} \cong \Tor_i(\Bbbk,J_U(G))_{\bm a}
\ \  \mbox{ for all } i \in \mathbb Z_{\geq 0} \mbox{ and }\bm a \in \mathbb Z^n_{\geq 0} \mbox{ with } \bm a \not \geq \mathbf e_U.
\]
\end{lemma}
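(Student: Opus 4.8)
The plan is to compare the two ideals $J(G)$ and $J_U(G)$ directly via a short exact sequence and show that the third term contributes nothing to $\Tor$ in the multidegrees $\bm a \not\geq \mathbf e_U$. Recall that $(J^U\!(G), J_U(G))$ is a splitting of $J(G)$, i.e. $J(G) = J^U\!(G) + J_U(G)$ with $\mathcal G(J(G)) = \mathcal G(J^U\!(G)) \sqcup \mathcal G(J_U(G))$. Every generator $x^W \in \mathcal G(J^U\!(G))$ has $W \supset U$, hence is divisible by $x^U$; consequently $J^U\!(G) \subset (x^U)$, and every minimal generator of $J^U\!(G)$, as well as every minimal generator of $J^U\!(G) \cap J_U(G)$, is divisible by $x^U$, i.e. has multidegree $\geq \mathbf e_U$.

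First I would write down the short exact sequence
\[
0 \longrightarrow J^U\!(G) \cap J_U(G) \longrightarrow J^U\!(G) \oplus J_U(G) \longrightarrow J(G) \longrightarrow 0
\]
and take the induced long exact sequence in $\Tor_i(\Bbbk, -)_{\bm a}$. The key observation is the standard fact that if $N$ is a monomial ideal all of whose minimal generators are divisible by a fixed monomial $m$, then $\Tor_i(\Bbbk, N)_{\bm a} = 0$ whenever $\bm a \not\geq \deg(m)$; this follows by taking the Taylor complex (or any multigraded free resolution built from the generators) and noting every multidegree appearing in it is $\geq \deg(m)$. Applying this with $m = x^U$ to both $N = J^U\!(G)$ and $N = J^U\!(G) \cap J_U(G)$ gives
\[
\Tor_i(\Bbbk, J^U\!(G))_{\bm a} = \Tor_i(\Bbbk, J^U\!(G) \cap J_U(G))_{\bm a} = 0
\qquad\text{whenever } \bm a \not\geq \mathbf e_U,
\]
for all $i \geq 0$. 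Feeding these vanishings into the long exact sequence, every third and fourth term collapses, so the connecting maps force $\Tor_i(\Bbbk, J_U(G))_{\bm a} \cong \Tor_i(\Bbbk, J(G))_{\bm a}$ for all $i$ and all $\bm a \not\geq \mathbf e_U$, which is exactly the claim.

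I expect no serious obstacle here; the only point requiring a little care is the assertion that an ideal generated in multidegrees $\geq \mathbf e_U$ has vanishing $\Tor$ outside those multidegrees. If one prefers not to invoke the Taylor complex, an alternative is to note $J^U\!(G) = x^U \cdot J'$ and $J^U\!(G)\cap J_U(G) = x^U \cdot J''$ for suitable monomial ideals $J', J''$ (using $\tilde J^U\!(G) = x^U \cdot J(G\setminus U)$ and Lemma \ref{lem:2-3}), and that multiplication by the monomial $x^U$ shifts all multigraded Betti numbers by $\mathbf e_U$; either way the vanishing in multidegrees $\bm a \not\geq \mathbf e_U$ is immediate. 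The degenerate cases ($U = \varnothing$, where the statement is vacuous or trivial, and $J^U\!(G) = 0$) should be checked but cause no difficulty.
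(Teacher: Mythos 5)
Your proposal is correct and follows essentially the same route as the paper: the same short exact sequence $0 \to J^U\!(G)\cap J_U(G) \to J^U\!(G)\oplus J_U(G) \to J(G) \to 0$, the observation that all generators of $J^U\!(G)$ and of $J^U\!(G)\cap J_U(G)$ are divisible by $x^U$ so their $\Tor$ vanishes in multidegrees $\bm a \not\geq \mathbf e_U$, and the resulting isomorphism from the long exact sequence. The extra justification you give via the Taylor complex is fine but not needed beyond what the paper states.
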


\begin{proof}
The short exact sequence
\[
0 \to J^U\!(G) \cap J_U(G) \to J^U\!(G) \oplus J_U(G) \to J(G)\to 0\]
induces the long exact sequence
{\small
\[
\cdots\!
\to\! \Tor_i(\Bbbk,J^U\!(G)\! \cap\! J_U(G))
\! \to\! \Tor_i(\Bbbk,J^U\!(G) )\!\oplus\! \Tor_i(\Bbbk,J_U(G)) 
\!\to\! \Tor_i(\Bbbk,J(G)) \!\to\! \cdots.
\]
}
\hspace{-6pt}
Since generators of $J^U\!(G) \cap J_U(G)$ and $J^U\!(G)$ are divisible by $x^U$ we have
\[
\Tor_i(\Bbbk,J^U\!(G) \cap J_U(G))_{\bm a} \cong
\Tor_i(\Bbbk,J^U\!(G))_{\bm a}=0
\ \ \mbox{ if } \bm a \not \geq \mathbf e_U.
\]
This and the above long exact sequence prove the desired statement.
\end{proof}

We now prove Theorem \ref{thm:1-1} in the introduction.

\begin{theorem}
\label{thm:3-3}
Let $G$ be a simple graph on $[n]$ and $v \in [n]$.
Suppose that $N=N_G(v)$ is an independent set of $G$.
Then $(J^v(G),J_v(G))$ is a Betti splitting.
Moreover, for any $i \geq 1$ and $\bm a \in \mathbb Z^n_{\geq 0}$, we have
\begin{align*}
\beta_{i,\bm a}(J(G))=
\begin{cases}
\beta_{i-1,\bm a}\big(\sum_{u \in N} x^{N \cup N_G(u)}\! \cdot \! J(G \!\setminus\! (N \cup N_G(u)))\big) & \mbox { if } \bm a \geq \mathbf e_v+\mathbf e_{N},\\
\beta_{i,\bm a}(x_v \cdot J(G \setminus \{v\})) & \mbox{ if $\bm a \geq \mathbf e_v$ and $\bm a \not \geq \mathbf e_N,$}\\
\beta_{i,\bm a}(x^N \cdot J(G\setminus N))
& \mbox{ if  $\bm a \not \geq \mathbf e_v$}. 
\end{cases}
\end{align*}
\end{theorem}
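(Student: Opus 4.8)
The plan is to write $K=J^v(G)$, $L=J_v(G)$ and $N=N_G(v)$, to pin down the three ideals $K$, $L$, $K\cap L$ together with the multidegrees in which their $\Tor$-modules can be nonzero, and then to extract both conclusions from the long exact sequence~\eqref{eq:3.1}. By Lemma~\ref{lem:2-1} we have $K=J_N(G)=x_v\cdot J_N(G\setminus\{v\})$ and $L=\tilde J^N\!(G)=x^N\cdot J(G\setminus N)$, so Lemma~\ref{lem:2-3} applied with $U=N$ yields
\[
K\cap L=\tilde J^N\!(G)\cap J_N(G)=\sum_{u\in N}x^{N\cup N_G(u)}\cdot J\big(G\setminus(N\cup N_G(u))\big),
\]
which is precisely the ideal occurring in the first case of the asserted formula.

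Next I would record where the relevant $\Tor$'s can live. Every generator of $K$ is divisible by $x_v$; every generator of $L$ is divisible by $x^N$; and since $u\in N$ forces $v\in N_G(u)$, every generator of $K\cap L$ is divisible by $x_v x^N=x^{\{v\}\cup N}$. Hence $\Tor_i(\Bbbk,K)_{\mathbf a}=0$ unless $\mathbf a\geq\mathbf e_v$, $\Tor_i(\Bbbk,L)_{\mathbf a}=0$ unless $\mathbf a\geq\mathbf e_N$, and $\Tor_i(\Bbbk,K\cap L)_{\mathbf a}=0$ unless $\mathbf a\geq\mathbf e_v+\mathbf e_N$. Moreover, when $\mathbf a\geq\mathbf e_v+\mathbf e_N$ both $\Tor_i(\Bbbk,K)_{\mathbf a}$ and $\Tor_i(\Bbbk,L)_{\mathbf a}$ vanish: for $L=x^N\cdot J(G\setminus N)$ this is because $v$ is an isolated vertex of $G\setminus N$, so $x_v$ divides no generator of $J(G\setminus N)$ while $\mathbf a-\mathbf e_N\geq\mathbf e_v$; for $K=x_v\cdot J_N(G\setminus\{v\})$ it is because $N$ is an independent set of $G\setminus\{v\}$ (being independent in $G$ with $v\notin N$) and $\mathbf a-\mathbf e_v\geq\mathbf e_N$, so $\Tor_i(\Bbbk,J_N(G\setminus\{v\}))_{\mathbf a-\mathbf e_v}=0$ by Proposition~\ref{prop:2-2}. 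This application of Proposition~\ref{prop:2-2} is the only place where the hypothesis that $N$ is independent is used, and it is the heart of the argument: it forces $\Tor_i(\Bbbk,K)$ and $\Tor_i(\Bbbk,L)$ to vanish in every multidegree $\mathbf a\geq\mathbf e_v+\mathbf e_N$, while $\Tor_i(\Bbbk,K\cap L)$ vanishes in every other multidegree.

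It then remains to feed $K$, $L$, $K\cap L$ into~\eqref{eq:3.1} and examine one multidegree $\mathbf a$ at a time, splitting $\mathbb Z^n_{\geq 0}$ into the three disjoint exhaustive cases $\mathbf a\geq\mathbf e_v+\mathbf e_N$; $\mathbf a\geq\mathbf e_v$ with $\mathbf a\not\geq\mathbf e_N$; and $\mathbf a\not\geq\mathbf e_v$. In the first case $\Tor_i(\Bbbk,K)_{\mathbf a}=\Tor_i(\Bbbk,L)_{\mathbf a}=0$, so~\eqref{eq:3.1} gives $\Tor_i(\Bbbk,J(G))_{\mathbf a}\cong\Tor_{i-1}(\Bbbk,K\cap L)_{\mathbf a}$ for $i\geq1$. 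In the second case $\Tor_i(\Bbbk,K\cap L)_{\mathbf a}=\Tor_i(\Bbbk,L)_{\mathbf a}=0$, so~\eqref{eq:3.1} gives $\Tor_i(\Bbbk,J(G))_{\mathbf a}\cong\Tor_i(\Bbbk,K)_{\mathbf a}$, and since $K=x_v\cdot J_N(G\setminus\{v\})$ and $\mathbf a-\mathbf e_v\not\geq\mathbf e_N$, Lemma~\ref{lem:3-2} for $G\setminus\{v\}$ with $U=N$ identifies this with $\Tor_i(\Bbbk,J(G\setminus\{v\}))_{\mathbf a-\mathbf e_v}=\Tor_i(\Bbbk,x_v\cdot J(G\setminus\{v\}))_{\mathbf a}$. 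In the third case $\Tor_i(\Bbbk,K\cap L)_{\mathbf a}=\Tor_i(\Bbbk,K)_{\mathbf a}=0$, and Lemma~\ref{lem:3-2} with $U=\{v\}$ gives $\Tor_i(\Bbbk,J(G))_{\mathbf a}\cong\Tor_i(\Bbbk,J_v(G))_{\mathbf a}=\Tor_i(\Bbbk,x^N\cdot J(G\setminus N))_{\mathbf a}$. In every multidegree the connecting map $\varphi_i$ of~\eqref{eq:3.1} thus has zero source or zero target, hence $\varphi_i=0$ for all $i$ and $(J^v(G),J_v(G))$ is a Betti splitting by Lemma~\ref{lem:3-1}; reading the three displayed isomorphisms through $\beta_{i,\mathbf a}=\dim_\Bbbk\Tor_i(\Bbbk,-)_{\mathbf a}$ gives the stated formula. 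The only genuinely nontrivial input is the Proposition~\ref{prop:2-2} vanishing above; once it and the divisibility observation about $K\cap L$ are in hand, everything else is routine bookkeeping with Lemmas~\ref{lem:2-1}, \ref{lem:2-3}, and~\ref{lem:3-2}.
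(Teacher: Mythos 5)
Your proposal is correct and follows essentially the same route as the paper: the identifications of $K$, $L$, $K\cap L$ via Lemmas \ref{lem:2-1} and \ref{lem:2-3}, the multidegree-by-multidegree vanishing analysis in the long exact sequence (with Proposition \ref{prop:2-2} as the key input for $K$), and Lemmas \ref{lem:3-1} and \ref{lem:3-2} to conclude. The only cosmetic differences are that you apply Proposition \ref{prop:2-2} to $J_N(G\setminus\{v\})$ rather than directly to $K=J_N(G)$, and you derive the vanishing of $\Tor_i(\Bbbk,L)_{\bm a}$ for $\bm a\geq\mathbf e_v$ through $x^N\cdot J(G\setminus N)$ instead of directly from $L=J_v(G)$ having no generator divisible by $x_v$.
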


\begin{proof}
Let $J=J(G)$, $K=J^v(G)$ and $L=J_v(G)$.
By Lemmas \ref{lem:2-1} and \ref{lem:2-3}, we have
\begin{align}
\label{eq:3-2}
K=J_N(G)=x_v \cdot J_N(G\setminus \{v\}),
\end{align}
\begin{align}
\label{eq:3-3}
L=\tilde J^N\!(G)=x^N \cdot J(G\setminus N),
\end{align}
and
\begin{align}
\label{eq:3-4}
\textstyle
K \cap L=J_N(G)\cap \tilde J_N\!(G)=\sum_{u \in N} x^{N \cup N_G(u)} J(G \setminus (N \cup N_G(u))).
\end{align}
Considering the long exact sequence \eqref{eq:3.1} for the pair $(K,L)=(J^v(G),J_v(G))$,
we have
{\small
\begin{align}
\label{eq:3.1.1}
\cdots 
\stackrel{\varphi_i}\longrightarrow \Tor_i(\Bbbk,K) _{\bm a}
\oplus
\Tor_i(\Bbbk,L) _{\bm a}
\to \Tor_i(\Bbbk,J) _{\bm a}
\to  \Tor_{i-1}(\Bbbk,K\cap L) _{\bm a} 
\stackrel{\varphi_{i-1} }\longrightarrow \cdots
\end{align}
}
\hspace{-8pt}
for any $\bm a \in \mathbb Z^n_{\geq 0}$.
We prove that $\varphi_i$ is the zero map for all $i$,
which proves the first statement by Lemma \ref{lem:3-1}.

Since any generator of $K \cap L$
is divisible by $x_vx^N$, we have
\begin{align}
\label{eq:3.6}
\Tor_i(\Bbbk, K \cap L)_{\bm a}=0
\ \ \mbox{ if }\bm a \not \geq \mathbf e_v + \mathbf e_N.
\end{align}
Since $L=J_v(G)$ does not contain any generator which is divisible by $x_v$,
we have
\begin{align}
\label{eq:3.7}
\Tor_i(\Bbbk, L)_{\bm a}=0
\ \ \mbox{ if }\bm a \geq \mathbf e_v.
\end{align}
Also, \eqref{eq:3-2} and Proposition \ref{prop:2-2} tell
\begin{align}
\label{eq:3.8}
\Tor_i(\Bbbk, K)_{\bm a}=0
\ \ \mbox{ if }\bm a \not \geq \mathbf e_v \mbox{ or } \bm a \geq \mathbf e_N.
\end{align}
Applying \eqref{eq:3.6}, \eqref{eq:3.7} and \eqref{eq:3.8} to the long exact sequence \eqref{eq:3.1.1},
we can see that $\varphi_i$ is the zero map,
proving that $(J^v(G),J_v(G))$ is a Betti splitting,
and
\begin{align*}
\Tor_i(\Bbbk,J(G))_{\bm a}
\cong
\begin{cases}
\Tor_{i-1}(\Bbbk,K \cap L)_{\bm a} & \mbox{ if }\bm a \geq \mathbf e_v + \mathbf e_N,\\
\Tor_{i}(\Bbbk,K)_{\bm a} & \mbox{ if }\bm a \geq \mathbf e_v \mbox{ and } \bm a \not \geq \mathbf e_N,\\
\Tor_{i}(\Bbbk,L)_{\bm a} & \mbox{ if }\bm a \not \geq \mathbf e_v.
\end{cases}
\end{align*}
This isomorphism proves the desired formula of $\beta_{i,\bm a}(J(G))$ using \eqref{eq:3-2}, \eqref{eq:3-3}, \eqref{eq:3-4} and Lemma \ref{lem:3-2}.
\end{proof}

\begin{example}
The assumption that $N_G(v)$ is an independent set is necessary in Theorem \ref{thm:3-3}.
Indeed, if we consider the graph $G$ below, then according to the computation by Macaulay2 \cite{GS}
we have $\beta_{1,6}(J^v(G)) =1$ but $\beta_{1,6}(J(G))=0$,
so $(J^v(G),J_v(G))$ is not a Betti splitting of $J(G)$.
\begin{center}
\includegraphics[scale=1.2]{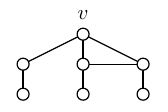}
\end{center}\end{example}

\begin{example}
Consider the graph $G$ in Example \ref{ex1.4}
and let $v=4$.
Then $N=N_G(4)=\{1,2\}$. Also
by \eqref{eq:3-2}, \eqref{eq:3-3} and \eqref{eq:3-4} we have
\[
x_v \cdot J(G\setminus \{v\})=(x_4x_5,x_2x_3x_4),
\ \
x^N\cdot J(G\setminus N)=(x_1x_2x_3,x_1x_2x_5)
\]
and
\begin{align*}
J^v(G)\cap J_v(G)
&=
x_1x_2x_4 \cdot J(G\setminus \{1,2,4\})+
x_1x_2x_4x_5\cdot J(G\setminus \{1,2,4,5\})\\
&=(x_1x_2x_3x_4,x_1x_2x_4x_5).
\end{align*}
It is easy to see that
{\footnotesize
\[
\beta_{i,\bm a}(x_v \cdot J(G\setminus \{v\}))=1
\mbox{ when }
(i,\bm a)\in \{(0,(0,0,0,1,1)),(0,(0,1,1,1,0)),(1,(0,1,1,1,1))\},
\]
\[
\beta_{i,\bm a}(x^N \cdot J(G\setminus N))=1
\mbox{ when }
(i,\bm a)\in \{(0,(1,1,1,0,0)),(0,(1,1,0,0,1)),(1,(1,1,1,0,1))\},
\smallskip\]
\[
\beta_{i,\bm a}(J^v(G)\cap J_v(G)
)=1
\mbox{ when }
(i,\bm a)\in \{(0,(1,1,1,1,0)),(0,(1,1,0,1,1)),(1,(1,1,1,1,1))\}
\]
}
\hspace{-8pt}
and all other values of $\beta_{i,\bm a}$ are zero.
Compare these computations with the multigraded Betti numbers of $J(G)$ given in Example \ref{ex1.4}.

We note that in this case we have $J^{\{4\}}(G)=x_4\cdot J(G\setminus \{4\})$,
but generally $J^{v}(G)$ is not equal to $x_v\cdot J(G\setminus \{v\})$.
For example $J^{\{2\}}(G)=x_2(x_1x_3,x_1x_5,x_3x_4)$ but $J(G\setminus \{2\})=(x_1x_3,x_1x_5,x_3x_4,x_4x_5)$.
\end{example}

Francisco, H\`a and Van Tuyl \cite[Theorem 3.8]{FHV} found a simple inductive formula of graded Betti numbers of cover ideals of Cohen--Macaulay bipartite graphs using Betti splittings.
This result can be considered as a special case of Theorem \ref{thm:3-3}.
Indeed, the proof of \cite[Theorem 3.8]{FHV} tells that if $G$ is a Cohen--Macaulay bipartite graph then $G$ must have a leaf vertex $v$,
this is, $N_G(v)=\{u\}$ for some vertex $u$,
and the $x_v$-partition of $J(G)$ is a Betti splitting.
In this case $N_G(v)$ is automatically an independent set of $G$ and we have
\[
J^v(G)=x_v\cdot J_u(G\setminus\{v\})=x^{N_G(u)}\cdot J(G\setminus N_G(u)),\ \
J_v(G)=x_u \cdot J(G \setminus \{u,v\})\]
and
\[
J^v(G) \cap J_v(G)=x_ux^{N_G(u)} \cdot J(G\setminus N_G(u)).
\]
The above equations actually hold whenever $N_G(v)=\{u\}$, so
by Theorem \ref{thm:3-3} we get the following statement.

\begin{corollary}[Francisco-H\`a-Van Tuyl]
\label{cor}
Let $G$ be a graph on $[n]$.
Suppose that $G$ has a vertex $v$ with $N_G(v)=\{u\}$ for some $u$. Let $G'=G\setminus \{u,v\}$ and $G''=G\setminus N_G(u)$. Then, for all $i \geq 1$ and $j \in \mathbb Z_{\geq 0}$ one has
\[
\beta_{i,j}(J(G))=
\beta_{i,j-1}(J(G'))
+\beta_{i,j-|N_G(u)|}(J(G''))+
\beta_{i-1,j-|N_G(u)|-1}(J(G'')).
\]
\end{corollary}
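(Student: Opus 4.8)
The plan is to derive Corollary~\ref{cor} directly from Theorem~\ref{thm:3-3} by specializing to the case $N=N_G(v)=\{u\}$, which is automatically an independent set (a single vertex spans no edge), so all hypotheses of the theorem are met. First I would verify the three ideal identities recorded in the paragraph preceding the corollary. For $K=J^v(G)$: by \eqref{eq:3-2} and Lemma~\ref{lem:2-1}(1) we have $K=x_v\cdot J_N(G\setminus\{v\})$ with $N=\{u\}$; since in $G\setminus\{v\}$ the vertex $u$ may have lost neighbours, I would apply Lemma~\ref{lem:2-1}(2) (reading the roles of ``$v$'' and ``$N$'' appropriately) to rewrite $x_v\cdot J_u(G\setminus\{v\})$. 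The cleanest route is: $J_u(G\setminus\{v\})=\tilde J^{N_{G\setminus\{v\}}(u)}(G\setminus\{v\})$, and since $v\notin N_G(u)$ (as $N_G(v)=\{u\}$ forces $u\ne v$ but says nothing forcing $v\in N_G(u)$ — actually $u\in N_G(v)$ does give $v\in N_G(u)$, so I must be careful here). Let me instead argue directly: a minimal vertex cover $W$ of $G$ with $v\in W$ satisfies $W\not\supset\{u\}$, i.e.\ $u\notin W$, hence $W\supset N_G(u)$ by Lemma~\ref{lem:2-0}(2); conversely any vertex cover of $G\setminus N_G(u)$ together with $N_G(u)$ (which contains $v$) is a vertex cover of $G$ avoiding $u$. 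This gives $J^v(G)=x^{N_G(u)}\cdot J(G\setminus N_G(u))$. Similarly $J_v(G)=\tilde J^N(G)=x^N\cdot J(G\setminus N)=x_u\cdot J(G\setminus\{u\})$ by \eqref{eq:3-3}; but one checks $v$ is isolated in $G\setminus\{u\}$, so $J(G\setminus\{u\})=J(G\setminus\{u,v\})$ as ideals of $S$, giving $J_v(G)=x_u\cdot J(G\setminus\{u,v\})$. Finally \eqref{eq:3-4} with $N=\{u\}$ reads $K\cap L=x^{\{u\}\cup N_G(u)}\cdot J(G\setminus(\{u\}\cup N_G(u)))=x_u x^{N_G(u)}\cdot J(G\setminus N_G(u))$, using $u\in N_G(u)$? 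No — $u\notin N_G(u)$; rather $N_G(u)$ does not contain $u$, so $\{u\}\cup N_G(u)$ has $|N_G(u)|+1$ elements. Either way the monomial has degree $|N_G(u)|+1$ and the residual graph is $G\setminus N_G(u)=G''$ together with possibly $u$ isolated, so $J(G\setminus(\{u\}\cup N_G(u)))=J(G'')$.

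Next I would pass to graded (total-degree) Betti numbers. Since $G$ is simple, $N_G(v)=\{u\}$ is trivially an independent set, so Theorem~\ref{thm:3-3} applies with $N=\{u\}$ and yields, for every $i\ge 1$ and every multidegree $\bm a$, an isomorphism of $\Tor$ modules according to the three cases $\bm a\ge\ee_v+\ee_u$, $\bm a\ge\ee_v$ and $\bm a\not\ge\ee_u$, and $\bm a\not\ge\ee_v$. I would then sum $\beta_{i,\bm a}$ over all $\bm a$ with $|\bm a|=j$. The point is that the three cases are governed by divisibility by $x_v$ and $x_u$, and each of the three ideals $x_v\cdot J(G')$, $x^{N_G(u)}\cdot J(G'')$, $x_u x^{N_G(u)}\cdot J(G'')$ has all its generators (hence all nonzero multigraded $\Tor$) supported exactly in the region dictated by the matching case: every generator of $K\cap L=x_u x^{N_G(u)}\cdot J(G'')$ is divisible by $x_v x_u$ (here I use $v\in N_G(u)$, which holds because $u\in N_G(v)$); every generator of $L=x_u\cdot J(G\setminus\{u,v\})$ is divisible by $x_u$ but by no power of $x_v$; every generator of $K=x^{N_G(u)}\cdot J(G'')$ is divisible by $x_v$ (again since $v\in N_G(u)$) but not by $x_u$. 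So the three multigraded pieces live in disjoint multidegree cones and summing over $|\bm a|=j$ simply adds the three contributions. The degree shifts come from the monomial prefactors: $x_v$ contributes a shift of $1$, $x^{N_G(u)}$ a shift of $|N_G(u)|$, and $x_u x^{N_G(u)}$ a shift of $|N_G(u)|+1$, together with the homological shift $i\mapsto i-1$ in the $K\cap L$ term coming from the connecting map in \eqref{eq:3.1.1}.

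Assembling these, for $i\ge 1$ and $j\ge 0$ I get
\[
\beta_{i,j}(J(G))=\beta_{i,j}\big(x_v\cdot J(G')\big)+\beta_{i,j}\big(x^{N_G(u)}\cdot J(G'')\big)+\beta_{i-1,j}\big(x_u x^{N_G(u)}\cdot J(G'')\big),
\]
and since multiplying a monomial ideal by a single monomial $x^{\bm c}$ only shifts multidegrees by $\bm c$ (and hence total degree by $|\bm c|$) without changing homological degree or Betti numbers, the three terms equal $\beta_{i,j-1}(J(G'))$, $\beta_{i,j-|N_G(u)|}(J(G''))$, and $\beta_{i-1,j-|N_G(u)|-1}(J(G''))$ respectively. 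This is exactly the claimed formula.

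The main obstacle I anticipate is the bookkeeping around which of $x_u,x_v$ divide which generators, and in particular the repeated use of the relation $v\in N_G(u)$ (equivalent to $u\in N_G(v)$) to conclude that $x^{N_G(u)}$ is divisible by $x_v$; getting this right is what makes the three multidegree cones genuinely disjoint and lets the summation over $|\bm a|=j$ go through cleanly. A secondary, purely cosmetic point is reconciling the degree of $x^{\{u\}\cup N_G(u)}$ (namely $|N_G(u)|+1$, since $u\notin N_G(u)$) with the exponent ``$|N_G(u)|-1$'' appearing in the statement's last term once the ambient shift by $j$ is accounted for; this is just arithmetic but worth stating explicitly so the reader can check the indices. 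Everything else is a direct transcription of Theorem~\ref{thm:3-3} specialized to $|N_G(v)|=1$.
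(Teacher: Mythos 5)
Your proposal is correct and matches the paper's own derivation: both specialize Theorem \ref{thm:3-3} to $N=N_G(v)=\{u\}$ and use the identities $J^v(G)=x^{N_G(u)}\cdot J(G'')$, $J_v(G)=x_u\cdot J(G')$ and $J^v(G)\cap J_v(G)=x_ux^{N_G(u)}\cdot J(G'')$, the paper simply plugging these into the Betti-splitting equality $\beta_{i,j}(J)=\beta_{i,j}(K)+\beta_{i,j}(L)+\beta_{i-1,j}(K\cap L)$, which is a touch shorter than your summation of the multigraded formula over $|\bm a|=j$. The only slip is cosmetic: in your final display the first term should be $\beta_{i,j}\big(x_u\cdot J(G')\big)$ rather than $\beta_{i,j}\big(x_v\cdot J(G')\big)$, since $L=J_v(G)$ has all generators divisible by $x_u$ and none by $x_v$ — but this does not affect the graded Betti number $\beta_{i,j-1}(J(G'))$.
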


\begin{remark}
It was recently proved in \cite[Proposition 2.3]{CF} that any Cohen--Macaulay very well-covered graph $G$ admit a vertex $v$ such that the $x_v$-partition of $J(G)$ is a Betti splitting. Theorem \ref{thm:3-3} does not cover this result.
\end{remark}

We finally prove Theorem \ref{thm:1-3}.

\begin{theorem}
\label{thm:3.4}
Let $G$ be a bipartite graph on $[n]$ with a bipartition $[n]=U \cup W$,
and let $M=(x^{N_G(u)}\mid u \in U) \subset S$.
Assume that no vertices in $U$ are isolated vertices.
Then $(J^{U}(G)=(x^{U}),J_{U}(G))$ is a Betti splitting of $J(G)$.
Moreover, for any $i \geq 1$ and $\bm a \in \mathbb Z^n_{\geq 0}$, one has
\begin{align}
\label{fin}
\beta_{i,\bm a}(J(G))=
\begin{cases}
\beta_{i,\bm a}(J_{U}(G)) & \mbox{ if } \bm a \not \geq \mathbf e_{U},\\
\beta_{i-1,\bm a}(x^U \cdot M) & \mbox{ if } \bm a  \geq \mathbf e_{U}.
\end{cases}
\end{align}
\end{theorem}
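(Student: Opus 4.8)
The plan is to verify the Betti splitting criterion of Lemma~\ref{lem:3-1} for the partition $(K,L):=(J^{U}(G),J_{U}(G))$ and then read off the multigraded Betti numbers from the resulting split long exact sequence~\eqref{eq:3.1}. The structural input is the following. Since $G$ is bipartite with bipartition $[n]=U\cup W$, no edge of $G$ is contained in $U$, so $U$ is an independent set of $G$; hence Proposition~\ref{prop:2-2} applies to $L=J_{U}(G)$ and yields $\Tor_i(\Bbbk,L)_{\bm a}=0$ whenever $\bm a\geq\mathbf e_U$. Since moreover no vertex of $U$ is isolated, Remark~\ref{rem:2-5} gives $K=J^{U}(G)=(x^{U})$, a principal (hence free) ideal generated in multidegree $\mathbf e_U$, together with $K\cap L=x^{U}\cdot M$. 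As $M$ only involves the variables $x_w$ with $w\in W$ and no $x^{N_G(u)}$ equals $1$ (because $N_G(u)\neq\varnothing$ for every $u\in U$), each generator in $\mathcal G(K\cap L)=x^{U}\cdot\mathcal G(M)$ is divisible by $x^{U}$ and has multidegree strictly larger than $\mathbf e_U$.

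Next I would show that the map $\varphi_i\colon\Tor_i(\Bbbk,K\cap L)\to\Tor_i(\Bbbk,K)\oplus\Tor_i(\Bbbk,L)$ of~\eqref{eq:3.1} is zero in every multidegree $\bm a$. If $\bm a\not\geq\mathbf e_U$, then $\Tor_i(\Bbbk,K\cap L)_{\bm a}=0$ since all generators of $K\cap L$ are divisible by $x^{U}$, so $\varphi_i$ vanishes in that degree for trivial reasons. If $\bm a\geq\mathbf e_U$, then $\Tor_i(\Bbbk,L)_{\bm a}=0$ by Proposition~\ref{prop:2-2}, and $\Tor_i(\Bbbk,K)_{\bm a}=0$ for $i\geq1$ by freeness of $K$; for $i=0$ the group $\Tor_0(\Bbbk,K)_{\bm a}$ is nonzero only when $\bm a=\mathbf e_U$, a degree in which $\Tor_0(\Bbbk,K\cap L)_{\bm a}=0$ because every generator of $x^{U}M$ has multidegree $>\mathbf e_U$. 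In every case either the source or the target of $\varphi_i$ is zero in degree $\bm a$, so $\varphi_i=0$ for all $i$, and Lemma~\ref{lem:3-1} shows that $(J^{U}(G),J_{U}(G))$ is a Betti splitting.

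Finally, with all $\varphi_i$ zero, the long exact sequence~\eqref{eq:3.1} breaks into short exact sequences
\[
0\to\Tor_i(\Bbbk,K)_{\bm a}\oplus\Tor_i(\Bbbk,L)_{\bm a}\to\Tor_i(\Bbbk,J(G))_{\bm a}\to\Tor_{i-1}(\Bbbk,K\cap L)_{\bm a}\to 0
\]
for all $i$ and all $\bm a$. For $\bm a\not\geq\mathbf e_U$ the terms coming from $K$ and from $K\cap L$ vanish (this recovers Lemma~\ref{lem:3-2}), so $\Tor_i(\Bbbk,J(G))_{\bm a}\cong\Tor_i(\Bbbk,J_{U}(G))_{\bm a}$; for $\bm a\geq\mathbf e_U$ and $i\geq1$ the terms coming from $K$ and from $L$ vanish, so $\Tor_i(\Bbbk,J(G))_{\bm a}\cong\Tor_{i-1}(\Bbbk,x^{U}\cdot M)_{\bm a}$. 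Passing to dimensions gives~\eqref{fin}. I do not anticipate a serious obstacle: the argument is essentially an assembly of Proposition~\ref{prop:2-2}, Remark~\ref{rem:2-5}, and Lemma~\ref{lem:3-1}, and the only point requiring a little care is the vanishing of $\varphi_0$ in multidegree $\mathbf e_U$, which is exactly where the hypothesis that no vertex of $U$ is isolated is used, via the strict multidegree inequality for the generators of $x^{U}M$.
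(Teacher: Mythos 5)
Your proposal is correct and follows essentially the same route as the paper's proof: identify $J^{U}(G)=(x^{U})$ and $J^{U}(G)\cap J_{U}(G)=x^{U}\cdot M$ via Remark~\ref{rem:2-5}, kill $\Tor_i(\Bbbk,J_{U}(G))_{\bm a}$ for $\bm a\geq\mathbf e_U$ with Proposition~\ref{prop:2-2}, and read off \eqref{fin} from the long exact sequence, using Lemma~\ref{lem:3-2} for the case $\bm a\not\geq\mathbf e_U$. Your explicit check that $\varphi_0$ vanishes in multidegree $\mathbf e_U$ is exactly the point the paper covers with the remark that $M\neq S$, so nothing is missing.
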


\begin{proof}
We note that $U$ is an independent set of $G$ and $x^U \cdot M=(x^U) \cap J_U(G)$ as we explained in Remark \ref{rem:2-5}.
Considering the long exact sequence \eqref{eq:3.1} for the pair $(J^U\!(G),J_U(G))$, we have
\[
\cdots \to \Tor_i(\Bbbk,x^U\cdot M)_{\bm a} \to \Tor_i(\Bbbk,(x^U))_{\bm a} \oplus \Tor_i(\Bbbk,J_U(G))_{\bm a} \to \Tor_i(\Bbbk,J(G))_{\bm a}\to \cdots
\]
for any $\bm a \in \mathbb Z^n_{\geq 0}$.
By Proposition \ref{prop:2-2},
\[\Tor_i(\Bbbk,J_U(G))_{\bm a}=0 \mbox{ for $i\geq 0$ and $\bm a \geq \mathbf e_U$}.\]
Then, since $\Tor_i(\Bbbk,(x^U))_{\bm a}$ is only non-zero when $(i,\bm a)=(0,\mathbf e_U)$ and since $M \ne S$,
we have
\[
\Tor_i(\Bbbk,J(G))_{\bm a} \cong \Tor_{i-1}(\Bbbk,x^U \cdot M)_{\bm a} \mbox{ for $i \geq 1$ and $\bm a \geq \mathbf e_U$}
\]
proving the desired equation \eqref{fin} when $\bm a \geq \mathbf e_U$.
The equation \eqref{fin} when $\bm a \not \geq \mathbf e_U$ is Lemma \ref{lem:3-2}.
\end{proof}
%\noindent
%\textbf{Acknowledgments}:

\end{document}